\theoremstyle{plain}
\newtheorem{Thm}{Theorem}
\newtheorem{Cor}{Corollary}
\newtheorem{Lem}{Lemma}
\newtheorem{Prop}{Proposition}
\theoremstyle{definition}
\newtheorem{Def}{Definition}
\theoremstyle{remark}
\newcommand {\n}{\noindent}
\newcommand \vsp {\vspace*{10pt}}
\newcommand{\R}{{\mathbb R}}
\newcommand{\N}{\mathbb N}
\newcommand{\Z}{\mathbb Z}
\newcommand{\Dli}{{\mathcal{D}}'}
\newcommand{\vspp}{\vspace*{5pt}}
\newcommand{\CB}{{\mathcal{B}}}
\newcommand{\bM}{\mathbf{M}}
\newcommand{\bN}{\mathbf{N}}
\newcommand{\Rou}[1]{{\mathcal E}^{\{#1\}}}
\newcommand{\E}{\mathcal{E}}
\newcommand{\fa}{\forall}
\newcommand{\ex}{\,\exists\,}
\newcommand{\alp}{\lvert\alpha\rvert}
\newcommand{\trb}{|||}
\begin{document}

	\title{The Metivier inequality and ultradifferentiable hypoellipticity}
	\author{Paulo D. Cordaro}
	\author{Stefan F\"urd\"os}
		
	\date{July 24. 2023. Corresponding author: stefan.fuerdoes@univie.ac.at}

\keywords{Ultradifferentiable hypoellipticity, Denjoy-Carleman classes}

\subjclass[2020]{Primary: 35B65. Secondary: 46E10}
\maketitle
	
\begin{abstract}
	In 1980 M{\'e}tivier characterized the analytic (and Gevrey) hypoellipticity of $L^2$-solvable
	partial linear differential operators by a-priori estimates.
	  In this note we extend this characterization to  
	 ultradifferentiable hypoellipticity with respect to Denjoy-Carleman classes given by suitable weight sequences.
	 We also discuss the case when the solutions can be taken as hyperfunctions and  present some applications.
\end{abstract}

\section*{Introduction}

In this work we study the regularity of solutions of linear partial differential equations within the framework of Denjoy-Carleman classes defined by appropriate
weight sequences. In fact, our principal concern here is to what extend a result due
to G.~M\'etivier (cf.~\cite{zbMATH03712425}), proved in the study of
analytic and Gevrey regularity, could still be valid in this more general set up.  

More precisely, in \cite{zbMATH03712425}  a characterization of analytic hypoellipticity is presented for $L^2$-solvable linear partial differential equations in terms of a very precise a priori inequality. The author also mentions that a similar characterization for Gevrey hypoellipticity is also valid, and in \cite{BoveMT} the result is extended for pseudodifferential operators.

In the present work we are able to extend this M{\'e}tivier result for what we call here {\sl admissible weight sequences} (see Definition  \ref{AdmissibleDef} below). 
The corresponding Denjoy-Carleman classes contain the Gevrey classes of order $s\geq 1$ properly.
 It is important to note that it is irrelevant in our presentation if the classes are either quasi-analytic or non quasi-analytic.
 
 One of the main points in M{\'e}tivier's argument is to give a positive answer to a question related to the concept of solvability: assume that $P=P(x,D)$ is a real analytic, linear partial
 differential operator in an open set $\Omega\subset\R^n$, which is $L^2$-solvable and Gevrey hypoelliptic of some order $s\geq 1$. Take $f\in {\mathcal G}^s(\Omega)$,
 an open set $U\Subset\Omega$ and let $u\in L^2(U)$ solve $Pu=f$ in $U$ with minimum $L^2$-norm. Automatically $u\in \mathcal{G}^s(U)$. Is it possible to bound the Sobolev norms of $u|_V$, where $V\Subset U$ is another open set, in terms of the Gevrey norms of $f|_U$?
 
 This is what is achieved by M{\'e}tivier when $s=1$.  The extension to the Gevrey case of arbitrary order is straightforward. Indeed, M{\'e}tivier's argument in the analytic case is based on an interpolation method for which it is needed
	  the choice of  suitable subsequences of $(j!)^{1/j}\sim j$ (in this case the subsequences are explicitly described).
         The main difference in the Gevrey case is that now it is needed subsequences of $(j!^s)^{1/j}\sim j^s$, 
	which can be obtained from the ones of the real analytic case after applying the uniform deformation $j\mapsto j^s$, cf.~\cite{BoveMT}.
	
	This deformation argument is no longer possible in the
 Denjoy-Carleman case. 
 The situation is now much more delicate, and the determination of the class of weight sequences for which the
 result is valid is indeed one of the key points in our work (see Definition \ref{AdmissibleDef} below). 
 Moreover the proof of our main result requires several new insights which we believe justifies its publication.

After we discuss several results on weight sequences and Denjoy-Carleman classes is Section 1, we state our main result in Section 2
(Theorem \ref{MainThm}) and prove some consequences of it. Section 3 is devoted to the proof of Theorem 1. Finally, in Section 4, we extend our result to
the hyperfunction set up and discuss the case of Hörmander's sum of squares operators.

	\section{Preliminaries on weight sequences and the corresponding Denjoy-Carleman classes}
		
	We say that	 a sequence of positive numbers $\bM=(M_k)_k$ is a weight sequence
	 if $M_0=1$, $M_1\geq 1$ and $\bM$ is logarithmic convex, i.e.
		\begin{gather}\label{LogConvexity}
			M_k^2\leq M_{k-1}M_{k+1},\quad \fa k\in\N,\\
			\intertext{and}
			\lim_{k\rightarrow\infty}\sqrt[\leftroot{3}\uproot{1}k]{M_k}=\infty.\label{RootsDivergence}
		\end{gather}
	If $\bM$ is a weight sequence the Denjoy-Carleman class 
	${\mathcal E}^{\{\bM\}}(\Omega)$, $\Omega\subseteq\R^n$ open, of ultradifferentiable functions associated
	to $\bM$, consists of all functions $f\in\E(\Omega)$, for which the following holds:
	for every compact set $K\subseteq\Omega$ there are constants $C,h>0$ such that
	\begin{equation*}
		\sup_{x\in K}\,|D^\alpha f(x)|\leq Ch^{\alp} M_{\alp}
	\end{equation*}
for all $\alpha\in\Z_+^n$.
It follows from \eqref{LogConvexity} that ${\mathcal E}^{\{\bM\}}(\Omega)$ is an algebra with respect
to the pointwise addition and multiplication of functions.
	If $\bM=(k!)_k$ then ${\mathcal E}^{\{\bM\}}(\Omega)={\mathcal A}(\Omega)$ is the space of analytic functions on $\Omega$,
	More generally, the Gevrey classes ${\mathcal G}^s(\Omega)$, $s\geq 1,$ are the Denjoy-Carleman classes
	associated to the weight sequences ${\mathbf{G}}^s=(k!^s)_k$.

In order to be able to impose further properties on the classes, we need to discuss some additional 
conditions on the weight sequences we shall deal with.
On the set of weight sequences we can establish the following relation:
if $\bM$ and ${\mathbf N}$ are two weight sequences we set
\begin{align*}
	\bM\preceq {\mathbf N} \quad &:\Longleftrightarrow\quad \exists\, C,h>0:\;\, M_k\leq Ch^k N_k \quad
	\forall k\in\Z_+.\end{align*}
The relation $\preceq$ is both reflexive and transitive.
When we also consider the equivalence relation $\approx$ given by
\begin{align*}
	\bM\approx{\mathbf N} \quad&:\Longleftrightarrow\quad \bM\preceq\bN \;\;\wedge\;\; \bN\preceq\bM
\end{align*}
and identify any pair $\bM,{\mathbf N}$ with $\bM\approx\bN$ then $\preceq$ is also antisymmetric,
i.e.\ $\preceq$ is a partial order. We may write $\bM\precnapprox\bN$ if $\bM\preceq\bN$ and
$\bN\npreceq\bM$.

It is easy to see that ${\mathcal E}^{\{\bM\}}(\Omega)\subseteq{\mathcal E}^{\{\bN\}}(\Omega)$ if
$\bM\preceq{\mathbf N}$.
 Thus, if $\bM$ satisfies
\begin{equation}\label{AnalyticInclusion}
	\exists\, C, h>0 \,\, :\,\, k! \leq Ch^k \bM_k,\quad k\in\Z_+, 
\end{equation}
then ${\mathcal A}(\Omega)\subseteq{\mathcal E}^{\{\bM\}}(\Omega)$.

We finally introduce a condition taken
from \cite{Komatsu1973}:
	\begin{equation}\label{mg}
	\exists A>0:\; \forall j,k\in\Z_+:\quad M_{j+k}\leq A^{j+k}M_jM_k.
\end{equation}
Condition \eqref{mg} implies in particular that ${\mathcal E}^{\{\bM\}}(\Omega)$ is closed under differentiation and that 
$\bM\preceq\mathbf{G}^s$ for some $s>1$, see Matsumoto \cite{10.1215/kjm/1250520659}.

If $\bM$ is a weight sequence then we are also going to use the following sequences
\begin{equation*}
	\qquad \mu_k=\frac{M_k}{M_{k-1}},\qquad 
	\Lambda_k=\sqrt[\leftroot{3}\uproot{1}k]{M_k}.
\end{equation*}

We note that \eqref{LogConvexity} gives that $\Lambda_k\leq \mu_k$ 
for all $k\in\N$ and therefore by \eqref{RootsDivergence} it follows that
\begin{equation}\label{DivisorDivergence}
	\lim_{k\rightarrow\infty} \mu_k=\infty.
\end{equation}
Furthermore, \eqref{AnalyticInclusion} is equivalent to the existence of  $\delta>0$ such that
\begin{equation}\label{AnalyticInclusion2}
	k\leq \delta \Lambda_k.
\end{equation}

From \eqref{mg} it follows that there is $\sigma>0$ such that
\begin{equation}\label{DerivClosed}
\Lambda_{k+1}\leq \sigma\Lambda_k.
\end{equation}
It is easy to see that \eqref{DerivClosed} is the condition (M2') of Komatsu \cite{Komatsu1973} written in terms of the
sequence $(\Lambda_k)_k$. 
This condition  is sufficient to guarantee that $\Rou{\bM}$ is closed under differentiation. 

\begin{Def} \label{AdmissibleDef}
	 A weight sequence satisfying properties \eqref{AnalyticInclusion}  and \eqref{mg}  will be referred to as
an {\sl admissible weight sequence}.
\end{Def}

Clearly the Gevrey sequences $\mathbf{G}^s$ are admissible weight sequences for any $s\geq 1$.

A more general family of admissible weight sequences is defined as follows: Let $s\geq 1$ and $\sigma\geq 0$. 
The weight sequence $\bN^{s,\sigma}$ is given by $N_k^{s,\sigma}=(k!)^s(\log(k+e))^{\sigma k}$.
It is easy to see that $\bN^{s,\sigma}$ is admissible for any choice of $s\geq 1$ and $\sigma\geq 0$. Furthermore $\bN^{s,0}=\mathbf{G}^s$ for all $s\geq 1$ and we have for $s\geq 1$ fixed that
\begin{equation*}
	\mathbf{G}^s\precnapprox\bN^{s,\sigma}\precnapprox\mathbf{G}^{s^\prime}
\end{equation*}
for all $\sigma>0$ and every $s^\prime>s$.

In order to present a weight sequence which is not admissible let $q>1$ be a parameter.
The sequence $\mathbf{L}^q$ given by $L_k^q=q^{k^2}$ is a weight sequence which 
satisfies \eqref{AnalyticInclusion} and \eqref{DerivClosed}. However, since
$\mathbf{G}^s\precnapprox\mathbf{L}^q$ for all $s,q> 1$, we conclude that $\mathbf{L}^q$ cannot satisfy \eqref{mg} for any $q>1$.
\vspp

\begin{Lem}\label{SequenceLemma}\footnote{We thank Prof. Gerhard Schindl for helpful discussions regarding this result.}
	Let $\bM$ be an admissible weight sequence.
	Then there is a constant $\sigma>1$
	such that the following holds: For each $k\in\N$ there is a sequence 
	$(k_j)_j$ such that $\Lambda_{k_0}\leq  \Lambda_k$ and
	\begin{equation*}
		\sigma^{j-1}\Lambda_k\leq \Lambda_{k_j}\leq \sigma^j\Lambda_k,\quad j\in\N.
	\end{equation*}
\end{Lem}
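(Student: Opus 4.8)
The plan is to build the subsequence $(k_j)_j$ greedily, starting from $k_0=k$ and at each step jumping as far forward as possible while staying below the factor $\sigma$. The two properties \eqref{AnalyticInclusion2} (equivalently $k\le\delta\Lambda_k$) and \eqref{DerivClosed} ($\Lambda_{k+1}\le\sigma_0\Lambda_k$ for some $\sigma_0>1$) are what make this work, and I expect the final $\sigma$ of the lemma to be $\sigma_0^2$ or thereabouts. Concretely: given $k_{j-1}$, let $k_j$ be the largest integer $m$ with $\Lambda_m\le\sigma_0\Lambda_{k_{j-1}}$ (this set is nonempty since $m=k_{j-1}$ qualifies, and finite since $\Lambda_m\to\infty$ by \eqref{RootsDivergence}). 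Then $\Lambda_{k_j}\le\sigma_0\Lambda_{k_{j-1}}$ by construction, which upon iterating gives $\Lambda_{k_j}\le\sigma_0^{\,j}\Lambda_{k_0}\le\sigma_0^{\,j}\Lambda_k$, the upper bound (even with $\sigma_0$ in place of $\sigma$).

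The lower bound is where the real content sits. By maximality of $k_j$ we have $\Lambda_{k_j+1}>\sigma_0\Lambda_{k_{j-1}}$, and then \eqref{DerivClosed} applied at $k_j$ gives $\sigma_0\Lambda_{k_j}\ge\Lambda_{k_j+1}>\sigma_0\Lambda_{k_{j-1}}$, i.e.\ $\Lambda_{k_j}>\Lambda_{k_{j-1}}$ — so the $\Lambda_{k_j}$ are strictly increasing, but that alone is too weak. To get a genuinely multiplicative lower bound I would argue that the sequence cannot stall: from $\Lambda_{k_j+1}>\sigma_0\Lambda_{k_{j-1}}$ and $\Lambda_{k_j}\ge\Lambda_{k_j+1}/\sigma_0 > \Lambda_{k_{j-1}}$ we still need to bridge from $k_{j-1}$ to $k_j$. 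The key point is that \eqref{DerivClosed} controls how slowly $\Lambda$ can grow, so a single forbidden jump of size $>\sigma_0$ at $k_j$ forces $\Lambda_{k_j}$ to already be close to $\sigma_0\Lambda_{k_{j-1}}$: indeed $\Lambda_{k_j}\ge\Lambda_{k_j+1}/\sigma_0>\Lambda_{k_{j-1}}$ is not yet enough, so instead I use that $\Lambda_{k_j+1}>\sigma_0\Lambda_{k_{j-1}}$ together with \eqref{DerivClosed} reversed is not available; the clean route is to note $\Lambda_{k_{j+1}}\ge\Lambda_{k_j+1}>\sigma_0\Lambda_{k_{j-1}}$, since $k_{j+1}\ge k_j+1$. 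This gives $\Lambda_{k_{j+1}}>\sigma_0\Lambda_{k_{j-1}}$ for every $j\ge1$, hence by induction $\Lambda_{k_{2i}}>\sigma_0^{\,i}\Lambda_{k_0}$ and $\Lambda_{k_{2i+1}}>\sigma_0^{\,i}\Lambda_{k_1}\ge\sigma_0^{\,i}\Lambda_{k_0}$; combining the even and odd cases, $\Lambda_{k_j}\ge\sigma_0^{\lfloor j/2\rfloor}\Lambda_k\ge(\sqrt{\sigma_0})^{\,j-1}\Lambda_k$. Setting $\sigma=\sigma_0^2$ then yields both $\sigma^{j-1}\Lambda_k\le\Lambda_{k_j}$ and $\Lambda_{k_j}\le\sigma_0^{\,j}\Lambda_k\le\sigma^{\,j}\Lambda_k$ after a mild adjustment of $\sigma_0$ to be at least $1$; the condition $\Lambda_{k_0}\le\Lambda_k$ holds with equality.

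One subtlety to handle carefully: \eqref{RootsDivergence} guarantees each "largest $m$" exists, but I should also confirm $k_j\to\infty$ (immediate, since $\Lambda_{k_j}\to\infty$) and that the $k_j$ are weakly increasing with $k_j\ge k_{j-1}$ — both follow from the construction. I do not actually need \eqref{AnalyticInclusion2} for this argument, but if a cleaner bound on $k_{j+1}-k_j$ were wanted (it is not, for the statement as given) one would invoke $k\le\delta\Lambda_k$. The main obstacle is precisely the lower bound: the naive greedy step only guarantees a factor between consecutive terms of the auxiliary sequence via the "forbidden jump" inequality, and one must shift indices by one (comparing $\Lambda_{k_{j+1}}$ with $\Lambda_{k_{j-1}}$) to convert the one-sided maximality condition into a true geometric growth rate, at the cost of replacing $\sigma_0$ by $\sqrt{\sigma_0}$ in the exponent — which is harmless after renaming constants.
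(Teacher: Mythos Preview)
Your greedy construction is sound up to the point where you extract the bounds $(\sqrt{\sigma_0})^{\,j-1}\Lambda_k\le\Lambda_{k_j}\le\sigma_0^{\,j}\Lambda_k$, but the last step is wrong: setting $\sigma=\sigma_0^2$ does \emph{not} give $\sigma^{j-1}\Lambda_k\le\Lambda_{k_j}$. With $\sigma=\sigma_0^2$ one has $\sigma^{j-1}=\sigma_0^{2(j-1)}$, while your lower bound is only $\sigma_0^{(j-1)/2}$; these disagree for every $j\ge2$. In fact no single $\sigma>1$ can turn your pair of bounds into the form $\sigma^{j-1}\le\Lambda_{k_j}/\Lambda_k\le\sigma^{j}$: the lower inequality forces $\sigma\le\sqrt{\sigma_0}$, the upper forces $\sigma\ge\sigma_0$, and these are incompatible since $\sigma_0>1$. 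The two-step recursion $\Lambda_{k_{j+1}}>\sigma_0\Lambda_{k_{j-1}}$ is genuinely weaker than a one-step geometric lower bound, and no reindexing by a fixed stride repairs it.

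The remedy is a small change in the greedy rule that makes both bounds automatic. Instead of defining $k_j$ relative to $\Lambda_{k_{j-1}}$, fix the target windows in advance: let $k_j$ be the greatest $m$ with $\Lambda_m\le\sigma_0^{\,j}\Lambda_k$ (equivalently, the greatest element of $T_j=\{m:\sigma_0^{\,j-1}\Lambda_k<\Lambda_m\le\sigma_0^{\,j}\Lambda_k\}$). Then $\sigma_0^{\,j-1}\Lambda_k<\Lambda_{k_j}\le\sigma_0^{\,j}\Lambda_k$ holds by definition, and the only issue is non-emptiness of $T_j$. Your own ``forbidden jump'' argument handles this: if $k_{j-1}=\max T_{j-1}$ and $m_0$ is the smallest index exceeding $k_{j-1}$ with $\Lambda_{m_0}>\Lambda_{k_{j-1}}$, then $\Lambda_{m_0-1}=\Lambda_{k_{j-1}}$ and \eqref{DerivClosed} gives $\Lambda_{m_0}\le\sigma_0\Lambda_{k_{j-1}}\le\sigma_0^{\,j}\Lambda_k$, while maximality of $k_{j-1}$ in $T_{j-1}$ forces $\Lambda_{m_0}>\sigma_0^{\,j-1}\Lambda_k$; hence $m_0\in T_j$. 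This is precisely the paper's construction, and it yields the lemma with $\sigma=\sigma_0$.
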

\begin{proof}
	We note that due to \eqref{LogConvexity}
	the sequence $(\Lambda_m)_m$ is increasing and 
	that  $\Lambda_m\rightarrow\infty$ for $m\rightarrow\infty$
	by \eqref{RootsDivergence}.
	Furthermore we can assume that $\sigma>1$ in \eqref{DerivClosed}.
	
	Now fix $k\in\N$. We construct the sequence $k_j$ iteratively.
	First set $k_0=k$. Then $\Lambda_{k_0}=\sigma^0 \Lambda_k$.
	Since $\Lambda_m\rightarrow\infty$ there has to be some $m>k$ such that $\Lambda_k<\Lambda_m$.
	Now let $m_0$ be the smallest integer such that $\Lambda_k<\Lambda_{m_0}$, in particular
	this means \ $\Lambda_k=\Lambda_{m_0-1}$ since the sequence $(\Lambda_m)_m$ is non-decreasing.
	Then we have $\Lambda_{m_0}\leq \sigma \Lambda_{m_0-1}=\sigma \Lambda_k$
	It follows that the set 
	$T_1^k=\{m\in\N:\, \Lambda_k< \Lambda_m\leq \sigma\Lambda_k \}$ is non-empty.
	We choose $k_1$ to be the greatest element of $T^k_1$.
	
	Now assume that we have chosen $k_j$ as the greatest number in 
	$T_j^k=\{m\in\N:\, \sigma^{j-1}\Lambda_k<\Lambda_m\leq \sigma^j \Lambda_k\}$.
	There again has to be some $m>k_j$ such that $\Lambda_{k_j}<\Lambda_{m}$.
	Let $m_j$ be the smallest integer $>k_j$ such that $\Lambda_{k_j}<\Lambda_{m_j}$, 
	which also implies that $\Lambda_{k_j}=\Lambda_{m_{j}-1}$.
	It follows that 
	$\Lambda_{m_j}\leq\sigma \Lambda_{m_j-1}=\sigma \Lambda_{k_j}\leq \sigma^{j+1}\Lambda_k$.
	Thence $T^k_{j+1}=\{m\in\N:\, \sigma^j\Lambda_k<\Lambda_m\leq\sigma^{j+1}\Lambda_k\}$ 
	is non-empty and we choose $k_{j+1}$ to be the greatest element in $T^k_{j+1}$.
\end{proof}

\begin{Def}
	Let $\bM$ be a weight sequence.
	The weight function associated to $\bM$ is given by
	\begin{equation}
		\omega_\bM(t)=\sup_{k\in\Z_+}\log \frac{t^k}{M_k},\qquad t\geq 0.
	\end{equation}
\end{Def}
We recall that the associated weight function $\omega_\bM$ is a continuous and increasing function
on the positive real line, cf.~\cite{zbMATH03075026}.
%We recall that we have the following identity
%\begin{equation*}
%	e^{-\omega_\bM(t)}=\sup
%\end{equation*}

\begin{Lem}\label{FunctionLemma}
	Let $\bM$ be an admissible weight sequence.
	Then the associated weight functions satisfies the following estimate
	\begin{equation}\label{mg2}
		\omega_\bM\left(\Lambda_k\right)\leq Hk,\qquad \fa k\in\N
	\end{equation}
	for some constant $H\geq 1$.
\end{Lem}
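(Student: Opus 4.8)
The plan is to use the elementary description of the associated weight function in terms of the quotient sequence $\mu_k=M_k/M_{k-1}$. Since $(\mu_i)_i$ is non-decreasing by \eqref{LogConvexity} and $\mu_i\to\infty$ by \eqref{DivisorDivergence}, the map $i\mapsto\log(t/\mu_i)$ is non-increasing and eventually negative, so for every $t>0$
\begin{equation*}
	\omega_\bM(t)=\sup_{j\in\Z_+}\sum_{i=1}^{j}\log\frac{t}{\mu_i}=\sum_{\mu_i<t}\log\frac{t}{\mu_i},
\end{equation*}
a finite sum. Taking $t=\Lambda_k$ and writing $\lambda$ for the number of indices $i\geq 1$ with $\mu_i<\Lambda_k$, this turns into the identity
\begin{equation*}
	\omega_\bM(\Lambda_k)=\lambda\log\Lambda_k-\log M_\lambda=\lambda\bigl(\log\Lambda_k-\log\Lambda_\lambda\bigr),
\end{equation*}
using $\log M_\lambda=\sum_{i=1}^{\lambda}\log\mu_i=\lambda\log\Lambda_\lambda$. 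From here \eqref{mg2} will follow from two facts: $\lambda\leq k$, and $\Lambda_k\leq A^2\Lambda_\lambda$ with $A$ the constant from \eqref{mg}. The first is immediate from $\Lambda_k\leq\mu_k$ (a consequence of \eqref{LogConvexity}) together with the monotonicity of $(\mu_i)_i$; granting the second, one obtains $\omega_\bM(\Lambda_k)\leq 2\lambda\log A\leq 2(\log A)k$, so $H=\max\{1,2\log A\}$ works.

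The one point requiring real work is the comparison $\Lambda_k\leq A^2\Lambda_\lambda$, and this is exactly where \eqref{mg} is used. I would first prove the auxiliary estimate
\begin{equation*}
	\mu_{m+1}\leq A^2\Lambda_m,\qquad \fa m\in\N.
\end{equation*}
Indeed, putting $j=k=m$ in \eqref{mg} gives $M_{2m}\leq A^{2m}M_m^2$, while the monotonicity of $(\mu_i)_i$ gives $M_{2m}=M_m\prod_{i=m+1}^{2m}\mu_i\geq M_m\mu_{m+1}^{m}$; combining these and taking $m$-th roots yields the claim (this also shows $\Lambda_m\leq\mu_m\leq A^2\Lambda_m$, i.e.\ that $\mu_k$ and $\Lambda_k$ are comparable, which is the true content). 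Since $\lambda$ is the largest index with $\mu_\lambda<\Lambda_k$, maximality gives $\mu_{\lambda+1}\geq\Lambda_k$, whence the auxiliary estimate with $m=\lambda$ gives $\Lambda_k\leq\mu_{\lambda+1}\leq A^2\Lambda_\lambda$, as needed. The degenerate case $\lambda=0$, i.e.\ $\mu_1\geq\Lambda_k$, is trivial since then $\omega_\bM(\Lambda_k)=0$; and one may assume $A>1$, since $A=1$ would force $\Lambda_k$ to be constant, contradicting \eqref{RootsDivergence}.

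The main obstacle is that no crude estimate of the sum suffices: bounding $\sum_{\mu_i<\Lambda_k}\log(\Lambda_k/\mu_i)$ by $\lambda$ copies of its largest term $\log\Lambda_k$ yields only a bound of the form $O(k\log k)$, which is not enough, and linear growth genuinely fails without moderate growth: the sequence $\mathbf{L}^q$ considered above satisfies \eqref{AnalyticInclusion} and \eqref{DerivClosed} but not \eqref{mg}, and for it $\omega_{\mathbf{L}^q}(\Lambda_k)$ grows quadratically in $k$. What rescues the estimate is precisely the comparability of $\mu_k$ and $\Lambda_k=M_k^{1/k}$ forced by \eqref{mg}, which collapses the difference $\lambda(\log\Lambda_k-\log\Lambda_\lambda)$ to at most $2\lambda\log A\leq 2(\log A)k$. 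We note that only \eqref{LogConvexity} and \eqref{mg} are used here; property \eqref{AnalyticInclusion} of admissibility is not needed for this lemma.
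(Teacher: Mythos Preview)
Your argument is correct. It rests on the same key fact as the paper's proof: condition \eqref{mg} forces the comparability $\mu_k\leq D\Lambda_k$, and from that the bound $\omega_\bM(\Lambda_k)\leq k\log D$ is immediate once one identifies the index at which the supremum defining $\omega_\bM$ is attained.

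The difference is in packaging. The paper first uses monotonicity of $\omega_\bM$ to pass from $\Lambda_k$ to $\mu_k$, then invokes Mandelbrojt's identity $\omega_\bM(\mu_k)=\log(\mu_k^k/M_k)=k\log(\mu_k/\Lambda_k)$ (the supremum is achieved exactly at $j=k$ when $t=\mu_k$), and finally cites Matsumoto for the equivalence of \eqref{mg} with $\mu_k\leq D\Lambda_k$. You instead work directly at $t=\Lambda_k$, locate the optimizing index $\lambda\leq k$, and supply a self-contained proof of $\mu_{m+1}\leq A^2\Lambda_m$ from $M_{2m}\leq A^{2m}M_m^2$. Your route is slightly longer but avoids both external references; the paper's is shorter but leans on Mandelbrojt and Matsumoto. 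Either way the substance is the same, and your closing remark that \eqref{AnalyticInclusion} plays no role is accurate.
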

\begin{proof}
	Since $\omega_\bM$ is a continuous and increasing function we have that
	$\omega_\bM(\Lambda_k)\leq \omega_\bM(\mu_k)$ for all $k\in\N$.
	Hence it is enough to show that $\omega_\bM(\mu_k)\leq H k$.
	By Mandelbrojt \cite{zbMATH03075026} we know the following fact:
	\begin{equation*}
		\fa k\in\N:\qquad  \omega_\bM\bigl(\mu_k\bigr)=\log \frac{\mu_k^{k}}{M_k}.
	\end{equation*}
	According to Matsumoto \cite{10.1215/kjm/1250520659} condition \eqref{mg} is equivalent to 
	\begin{equation*}
		\exists D>0\;\fa k\in\N:\quad \mu_k\leq D\Lambda_k.
	\end{equation*}
	Here we can choose $D=2A$ where $A$ is the constant from $\eqref{mg}$.
	It follows that 
	\begin{equation*}
		\mu_k^k\leq D^k M_k
	\end{equation*}
	which in turn implies that 
	\begin{equation*}
		\omega_\bM(\mu_k)=\log\frac{\mu_k^k}{M_k}\leq H k
	\end{equation*}
	where $H=\log D$. Since we can assume without loss of generality that $D\geq e$ we have that $H\ge 1$.
\end{proof}

We need also to dwell a little bit on the functional analytic structure of Denjoy-Carleman classes,
for more details see \cite{Komatsu1973}. We shall use the following notation: if $U\subseteq \R^{n}$ is an open subset then $\CB(U)$ denotes
the space of all bounded,  smooth functions on $U$ which have all its derivatives also bounded.
For each weight sequence $\bM$ and all open sets $U\subseteq\R^n$ we can define a norm
on  $\CB(U)$ by
\begin{equation*}
	\|f\|_{\bM,U,h}=\sup_{\alpha\in\Z_+^n}
	\frac{{\|D^\alpha f\|}_{L^\infty(U)}}{\!\!h^{\alp} M_{\alp}}.
\end{equation*}
The resulting Banach space is
\begin{equation*}
	\CB_{\bM,h}(U)=\{f\in \CB(U):\,\|f\|_{\bM,U,h}<\infty\}.
\end{equation*}

Clearly we have $\CB_{\bM,h_1}(U)\subseteq \CB_{\bM,h_2}(U)$ if $h_1\leq h_2$ and thus there is
a continuous and injective map
$\vartheta_{h_1}^{h_2}: \CB_{\bM,h_1}(U) \longrightarrow \CB_{\bM,h_2}(U)$, which is 
 compact if $h_1<h_2$ by \cite[Proposition 2.2]{Komatsu1973}.
We can then introduce the classes of global ultradifferentiable functions on $U$:
%the Roumieu class
\begin{equation*}
	\CB^{\{\bM\}}(U)=\mbox{ind}_{h>0} \CB_{\bM,h}(U)=\mbox{ind}_{\ell\in\N} \CB_{\bM,\ell}(U) \\
%\shortintertext{and the Beurling class}
%	\gBeu{\bM}{U}&=\proj_{h>0}\Ban{\bM}{U}=\proj_{\ell\in\N}\Ban[1/\ell]{\bM}{U}.
\end{equation*}

We observe that $\CB^{\{\bM\}}(U)$ is a (DFS)-space and thus, in particular, a webbed space
\cite[p.~63, (8)]{Koethe1979}. Notice also that
\begin{equation}
\CB^{\{\bM\}}(U)=\{ f\in \CB(U): \exists h>0 \mbox{ such that } \|f\|_{\bM,U,h}<\infty\}.
%\gBeu{\bM}{U}&=\Set{f\in\E(U)\given \fa h>0\;\, \UltraNorm*{f}{U}{\bM}{h}<\infty}.
\end{equation}

Next we localize the preceding concepts. The (local) Denjoy-Carleman class associated to the weight sequence $\bM$ is defined as
\begin{equation*}
	\mathcal{E}^{\{\bM\}}(U)=\mbox{proj}_{V\Subset U} \CB^{\{M\}}(V).
\end{equation*}

As before we have
$$\mathcal{E}^{\{\bM\}}(U) = \{ f\in {\mathcal E}(U) : \forall \,V \Subset U\, \ex h>0 \mbox{ such that }  \|f\|_{\bM,V,h}<\infty\}.$$

It is easy to see that $\CB^{\{\bM\}}(U)$ and $\mathcal{E}^{\{\bM\}}(U)$ are algebras with respect to 
the pointwise operations. We are going to occasionally refer to $f$ to be of class $\{\bM\}$ in $U$
if $f\in {\mathcal E}^{\{\bM\}}(U)$.

\section{The concept of  $\{\bM\}$-hypoellipticity. Statement of the main results.}

\begin{Def}
	Let $U\subseteq\R^n$ be an open set and $\bM$ be a weight sequence.
	If $P=P(x,D)$ is a linear partial differential operator with coefficients in $\Rou{\bM}(U)$ then
	$P$ is $\{\bM\}$-hypoelliptic in $U$ if given $u\in\Dli(U)$ the following holds:
	if $V\subseteq U$ is open and if $Pu\vert_V\in\Rou{\bM}(V)$ then $u\vert_V\in\Rou{\bM}(V)$.
	
	Furthermore $P$ is $\{\bM\}$-hypoelliptic at $x_0\in U$ if there is a neighborhood $U_0$ of $x_0$ such that $P$ is $\{\bM\}$-hypoelliptic in $U_0$.
\end{Def}
M\'{e}tivier \cite{zbMATH03712425}
gave a criterion for analytic (and Gevrey) hypoellipticity at a point in the case
of differential operators $P$ with analytic coefficients in $\Omega$
which satisfy the following condition:
\begin{equation}\label{RightInverse}\tag{\textbf{H}}
	\text{There is a continuous operator }R\negmedspace:\,L^2(\Omega)\rightarrow L^2(\Omega)\,
	\text{ such that }PR=\mathrm{Id}.
\end{equation}
Bove, Mughetti and Tartakoff \cite{BoveMT} extended this characterization to Gevrey hypoellipticity
of  analytic pseudodifferential operators.

Our aim is to generalize M\'etivier's result to $\{\bM\}$-hypoellipticity.
In order to do so we need to introduce a weighted Sobolev norm:
For an open set $U\subseteq\R^n$, a weight sequence $\bM$ and $k\in\N$ we set
\begin{equation*}
\trb u\trb_{U,\bM,k}=\sum_{\alp\leq k} \Lambda_k^{k-\alp}\lVert D^\alpha u\rVert_{L^2(U)}.
\end{equation*}
Our main result is the following theorem:
\begin{Thm}\label{MainThm}
	Let $\bM$ be an admissible weight sequence, 
	$P$ be a differential operator with 
	ultradifferentiable coefficients of class $\{\bM\}$
	in $\Omega$ which satisfies \eqref{RightInverse}.
	
	Then $P$ is $\{\bM\}$-hypoelliptic 
	at  a point $x_0\in \Omega$ if and only if there is a neighborhood $U_0\subseteq\Omega$
	of $x_0$ such that
	for all open sets $V\Subset U\subseteq U_0$ there are constants $C,L>0$ such that
	for all $\Dli(U)$ and all $k\in\Z_+$ we have 
	\begin{gather}
		\label{SmoothCondition1} Pu\in H^k(U)\Longrightarrow u\in H^k(V),\\
		\lVert u\rVert_{H^k(V)}\leq CL^k \left(\trb Pu\trb_{U,\bM,k}
		+M_k\lVert u\rVert_{L^2(U)}\right).
		\label{UltraCondition}
	\end{gather}
\end{Thm}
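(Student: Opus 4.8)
The plan is to prove the two directions of the equivalence separately, with the ``if'' direction being essentially a soft functional-analytic argument and the ``only if'' direction being the hard analytic core, generalizing M\'etivier's interpolation technique via Lemma~\ref{SequenceLemma}.

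For the ``only if'' direction, assume $P$ is $\{\bM\}$-hypoelliptic at $x_0$, so there is a neighborhood $U_0$ on which $P$ is $\{\bM\}$-hypoelliptic. First I would fix $V\Subset U\subseteq U_0$ and set up the relevant spaces: the right inverse $R$ from \eqref{RightInverse} lets one write, for $u\in\Dli(U)$ with $Pu=f$, a decomposition $u = Rf + (u-Rf)$ where $P(u-Rf)=0$, so one reduces (after cutting off) to controlling solutions of $Pu=f$ in terms of $f$. The key step is a closed-graph / Baire-category argument: consider the linear map $f\mapsto u|_V$ and show that the hypoellipticity hypothesis forces a seminorm estimate. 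Concretely, for each fixed $k$, consider the set of $u\in\Dli(U)$ with $Pu\in H^k(U)$; \eqref{SmoothCondition1} is a qualitative statement that such $u$ restrict to $H^k(V)$, and the graph of $u\mapsto u|_V$ on this space is closed, so by the closed graph theorem one gets, for each fixed $k$, a constant $C_k$ with $\|u\|_{H^k(V)}\le C_k(\|Pu\|_{H^k(U)} + \|u\|_{L^2(U)})$ — but this is not yet \eqref{UltraCondition} because the constant must be shown to grow at most like $CL^k$ and the norm on the right must be the weaker $\trb\cdot\trb_{U,\bM,k}$ rather than the full $H^k$ norm. The mechanism to upgrade the $k$-dependence is exactly where Lemma~\ref{SequenceLemma} enters: one interpolates the estimate at a well-chosen subsequence $k_j$ of indices (for which $\Lambda_{k_j}$ grows geometrically like $\sigma^j\Lambda_k$), using that $\bM$ is admissible so that $\omega_\bM(\Lambda_k)\le Hk$ by Lemma~\ref{FunctionLemma}; summing a geometric series over $j$ converts the family of Sobolev estimates at the $k_j$ into a single ultradifferentiable estimate with the desired exponential constant $L^k$. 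The passage from $\|Pu\|_{H^k(U)}$ to $\trb Pu\trb_{U,\bM,k}$ follows because $Pu$ is of class $\{\bM\}$ near the points being used, so its Sobolev norms on slightly smaller sets obey $\|Pu\|_{H^m(U')}\le C L^m M_m$, and the weighted sum telescopes appropriately against the factors $\Lambda_k^{k-|\alpha|}$ in the definition of $\trb\cdot\trb$.

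For the ``if'' direction, assume the a priori estimate \eqref{UltraCondition} together with \eqref{SmoothCondition1} holds on some $U_0$; I must show $P$ is $\{\bM\}$-hypoelliptic in $U_0$. Take $u\in\Dli(U_0)$ with $Pu|_V\in\Rou{\bM}(V)$ for some open $V$; shrinking, fix $V'\Subset V''\Subset V$ and a cutoff, and first note that \eqref{SmoothCondition1} gives $u\in H^k(V'')$ for every $k$, hence $u$ is smooth there. Then the quantitative estimate \eqref{UltraCondition}, applied with $U=V''$ and the smaller set $V'$, together with $\trb Pu\trb_{V'',\bM,k}\le C L^k M_k$ (which holds since $Pu$ is of class $\{\bM\}$ on $V''$, after absorbing the $\Lambda_k^{k-|\alpha|}$ weights using that $\Lambda_m\le\mu_m\le D\Lambda_m$ and $M_m\le M_k\Lambda_k^{m-k}\cdot(\text{const})$ for $m\le k$ by log-convexity), yields $\|u\|_{H^k(V')}\le C' (L')^k M_k$. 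Converting these $L^2$-Sobolev bounds into pointwise sup-norm bounds on a yet smaller set via Sobolev embedding (which costs only an extra fixed power of $k$, harmless against $L^k$ after adjusting $L$ and using \eqref{DerivClosed}) shows $u\in\Rou{\bM}$ near $x_0$, and since $x_0\in V$ was arbitrary, $u|_V\in\Rou{\bM}(V)$.

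I expect the main obstacle to be controlling the $k$-dependence of the constant in the ``only if'' direction: the closed graph theorem alone gives an uncontrolled sequence $C_k$, and the heart of the matter — the reason admissibility of $\bM$ (conditions \eqref{AnalyticInclusion} and \eqref{mg}) is exactly what is needed — is to run M\'etivier's interpolation so that the geometric growth of $\Lambda_{k_j}$ along the subsequences of Lemma~\ref{SequenceLemma} precisely compensates the combinatorics and produces an honest exponential $CL^k$. A secondary technical point is bookkeeping with the several nested open sets ($V\Subset\cdots\Subset U$) and the cutoff functions, since each localization of a $\{\bM\}$-function introduces constants depending on the cutoff, and one must arrange these so that they do not accumulate with $k$; this is routine but must be done carefully, using that $\Rou{\bM}$ is closed under multiplication.
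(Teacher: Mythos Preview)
Your ``if'' direction is essentially the paper's argument and is fine.

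The ``only if'' direction, however, has a genuine gap: the mechanism you describe is not how the interpolation works, and as written it cannot produce the exponential bound. You propose to apply the closed graph theorem \emph{for each fixed $k$} to obtain $\lVert u\rVert_{H^k(V)}\le C_k(\lVert Pu\rVert_{H^k(U)}+\lVert u\rVert_{L^2(U)})$ with an uncontrolled $C_k$, and then to ``interpolate along the subsequence $k_j$'' from Lemma~\ref{SequenceLemma} to upgrade $C_k$ to $CL^k$. But Lemma~\ref{SequenceLemma} gives no information whatsoever about the constants $C_{k_j}$; it only says the numbers $\Lambda_{k_j}$ grow geometrically. There is no interpolation principle that turns an arbitrary sequence of a~priori constants into an exponential one. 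Relatedly, your step~4 is based on the false premise that $Pu$ is of class $\{\bM\}$ in this direction: we only assume $Pu\in H^k(U)$ for one fixed $k$.

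What the paper actually does is quite different. The closed graph theorem is applied \emph{once}, to the restriction map from a carefully chosen Banach space into $\CB^{\{\bM\}}$, so that a single constant already encodes the full ultradifferentiable control. Concretely, one introduces the Hilbert space $G_{\{\bM\}}=\{g\in L^2(\R^n):e^{\omega_\bM(\lvert\xi\rvert)}\hat g\in L^2\}$ and the Banach space $E=R(G_{\{\bM\}}\vert_\Omega)\hookrightarrow L^2(\Omega)$; hypoellipticity forces $E\to\CB^{\{\bM\}}(U)$ to be continuous (Proposition~\ref{FourierTransform}), and similarly for $\mathcal P_0(U)$ (Proposition~\ref{HomogProp}). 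The role of Lemma~\ref{SequenceLemma} is then a \emph{Fourier-side} Littlewood--Paley decomposition: for $f=\chi_k Pu\in H^k$ one writes $f=\sum_j f_j$ with $\widehat{f_j}$ supported in the annulus $\Lambda_{k_{j-1}}\le\lvert\xi\rvert\le\Lambda_{k_j}$ (Lemma~\ref{InterpolationLemma1}), so that each $f_j\in G_{\{\bM\}}$ and hence $v_j=R(f_j\vert_\Omega)\in E$. Lemma~\ref{InterpolationLemma2} reassembles $v=\sum_j v_j$ with $\lVert v\rVert_{H^k(V)}\le C^{k+1}\trb f\trb_{\R^n,\bM,k}$; the geometric growth of $\Lambda_{k_j}$ and the bound $\omega_\bM(\Lambda_k)\le Hk$ are used here to sum a geometric series in $j$, not in $k$. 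Finally $P(u-v)=0$ on $V$, so Proposition~\ref{HomogProp} controls $u-v$. The missing ingredients in your sketch are precisely the space $G_{\{\bM\}}$, the Banach space $E$, and the Fourier decomposition Lemmas~\ref{InterpolationLemma1}--\ref{InterpolationLemma2}.
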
 

	It follows immediately from the condition \eqref{SmoothCondition1} that if a differential
	operator $P$ is $\{\bM\}$-hypoelliptic at $x_0$ for some admissible weight sequence $\bM$  then $P$ is smooth hypoelliptic at $x_0$.
	Furthermore we have the following corollaries.

\begin{Cor}\label{MetivierCor1}
	Let $\bM$ and $\bM^\prime$ be two admissible weight sequences such that $\bM\preceq\bM^\prime$.
	Moreover, let $\Omega\subseteq\R^n$ be an open set and
	$P$ be a differential operator with $\Rou{\bM}(\Omega)$-coefficients of class $\{\bM\}$   such that \eqref{RightInverse} holds.
	If $P$ is $\{\bM\}$-hypoelliptic at $x_0\in\Omega$
	then $P$ is $\{\bM^\prime\}$-hypoelliptic at $x_0$.
\end{Cor}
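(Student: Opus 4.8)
The plan is to deduce $\{\bM^\prime\}$-hypoellipticity of $P$ at $x_0$ from Theorem \ref{MainThm} applied to $\bM^\prime$. Concretely, since $\bM\preceq\bM^\prime$ we have $\Rou{\bM}(\Omega)\subseteq\Rou{\bM^\prime}(\Omega)$, so $P$ has coefficients of class $\{\bM^\prime\}$ and still satisfies \eqref{RightInverse}; thus it suffices to verify the a priori estimates \eqref{SmoothCondition1}--\eqref{UltraCondition} for $\bM^\prime$ on a suitable neighborhood. By the hypothesis and the ``only if'' direction of Theorem \ref{MainThm} for $\bM$, there is a neighborhood $U_0$ of $x_0$ such that for every pair $V\Subset U\subseteq U_0$ there are $C,L>0$ with \eqref{SmoothCondition1} and \eqref{UltraCondition} holding for $\bM$. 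Since \eqref{SmoothCondition1} is a statement purely about Sobolev spaces, it is unchanged when we pass to $\bM^\prime$, so only \eqref{UltraCondition} needs work.

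The first step is to record the quantitative content of $\bM\preceq\bM^\prime$: there are $c,\rho\geq 1$ with $M_k\leq c\,\rho^k M^\prime_k$ for all $k$, hence $\Lambda_k\leq c^{1/k}\rho\,\Lambda^\prime_k\leq c\rho\,\Lambda^\prime_k$ and $M_k\leq c\rho^k M^\prime_k$. The second step is to compare the weighted Sobolev norms $\trb\cdot\trb_{U,\bM,k}$ and $\trb\cdot\trb_{U,\bM^\prime,k}$. In the sum defining $\trb Pu\trb_{U,\bM,k}$ the weight on the term $\|D^\alpha Pu\|_{L^2(U)}$ is $\Lambda_k^{k-\alp}$; using $\Lambda_k\leq c\rho\,\Lambda^\prime_k$ and $0\leq k-\alp\leq k$ we get $\Lambda_k^{k-\alp}\leq (c\rho)^k (\Lambda^\prime_k)^{k-\alp}$, so
\begin{equation*}
	\trb Pu\trb_{U,\bM,k}\leq (c\rho)^k\,\trb Pu\trb_{U,\bM^\prime,k}.
\end{equation*}
Likewise $M_k\|u\|_{L^2(U)}\leq c\rho^k M^\prime_k\|u\|_{L^2(U)}$. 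Substituting these two bounds into \eqref{UltraCondition} for $\bM$ yields
\begin{equation*}
	\lVert u\rVert_{H^k(V)}\leq CL^k\bigl((c\rho)^k\trb Pu\trb_{U,\bM^\prime,k}+c\rho^k M^\prime_k\lVert u\rVert_{L^2(U)}\bigr)
	\leq C^\prime (L^\prime)^k\bigl(\trb Pu\trb_{U,\bM^\prime,k}+M^\prime_k\lVert u\rVert_{L^2(U)}\bigr)
\end{equation*}
with $C^\prime=cC$ and $L^\prime=c\rho L$, which is exactly \eqref{UltraCondition} for $\bM^\prime$ with the same neighborhood $U_0$ and the same choice of $V\Subset U$. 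Together with \eqref{SmoothCondition1} (unchanged), Theorem \ref{MainThm} applied to $\bM^\prime$ now gives that $P$ is $\{\bM^\prime\}$-hypoelliptic at $x_0$.

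I do not expect a serious obstacle here: the argument is essentially the monotonicity built into the constructions, namely that $\bM\preceq\bM^\prime$ forces both the class inclusion $\Rou{\bM}\subseteq\Rou{\bM^\prime}$ and a termwise comparison of the weighted Sobolev norms. The only point demanding a little care is bookkeeping of the constants — making sure the geometric factors $(c\rho)^k$ and $\rho^k$ are absorbed into the new $L^\prime$ and that the uniformity in the pair $(V,U)$ is preserved — but this is routine. One should also note that the use of Theorem \ref{MainThm} is legitimate for both $\bM$ and $\bM^\prime$ precisely because both are assumed admissible, which is why that hypothesis appears in the statement.
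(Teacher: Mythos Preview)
Your argument is correct and follows essentially the same route as the paper: use Theorem \ref{MainThm} for $\bM$ to obtain \eqref{SmoothCondition1}--\eqref{UltraCondition}, compare $\trb\cdot\trb_{U,\bM,k}$ with $\trb\cdot\trb_{U,\bM^\prime,k}$ via $\Lambda_k\leq c\rho\,\Lambda^\prime_k$ and $M_k\leq c\rho^k M^\prime_k$, and then apply Theorem \ref{MainThm} for $\bM^\prime$. Your write-up is in fact slightly more explicit than the paper's in noting that the coefficients automatically belong to $\Rou{\bM^\prime}$ and that admissibility of both sequences is what licenses the two invocations of Theorem \ref{MainThm}.
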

\begin{proof}[Proof of Corollary \ref{MetivierCor1}]
	If we set $\Lambda_k=(M_k)^{1/k}$ and $\Lambda_k^\prime=(M_k^\prime)^{1/k}$
	then $\bM\preceq\bM^\prime$ implies
	that there are constants $C_1,h$ such that $\Lambda_k\leq \sqrt[k]{C_1}h\Lambda_k^\prime$
	for all $k\in\N$. We can assume that $C_1,h\geq 1$.
	Thus we conclude that
	\begin{equation*}
		\begin{split}
			\trb u\trb_{U,\bM,k}
			&=\sum_{\alp\leq k}\Lambda_k^{k-\alp}\lVert D^\alpha u\rVert_{L^2(U)}\\
			&\leq \sum_{\alp\leq k}C_1^{(k-\alp)/(k)}h^{k-\alp}
			\left(\Lambda_k^\prime\right)^{k-\alp}\lVert D^\alpha u\rVert_{L^2(U)}\\
			&\leq C_1h^k\trb u\trb_{U,\bM^\prime,k}.
		\end{split}
	\end{equation*}
	
	If $P$ is $\{\bM\}$-hypoelliptic at $x_0$ then 
	there is a neighborhood $U_0$ of $x_0$ such that for all $V\Subset U\subseteq U_0$ the condition \eqref{SmoothCondition} holds
	and \eqref{UltraCondition}
	is satisfied for $\bM$ and for some constants $C_2,L$ independent of $k$ and $u\in\Dli(U)$.
	Hence 
	\begin{equation*}
		\begin{split}
			\lVert u\rVert_{H^k(V)}&\leq C_2L^k\left(\trb Pu\trb_{U,\bM,k}+M_k\lVert u\rVert_{L^2(U)}\right)\\
			&\leq C_2L^k\left(C_1h^k\trb Pu\trb_{U,\bM^\prime,k}+C_1h^kM_k^\prime
			\lVert u\rVert_{L^2(U)}\right)\\
			&=C_1C_2(hL)^k\left(\trb Pu\trb_{U,\bM^\prime,k}+M_k^\prime\lVert u\rVert_{L^2(U)}\right)
		\end{split}
	\end{equation*}
	and therefore $P$ is $\{\bM^\prime\}$-hypoelliptic at $x_0$.
\end{proof}
As a special case we obtain
\begin{Cor}\label{AnalyticDistrCor}
	Let $P$ be a differential operator with analytic coefficients in $\Omega\subseteq\R^n$ satisfying \eqref{RightInverse}.
	 If $P$ is analytic hypoelliptic at some point $x_0\in\Omega$ then
	$P$ is $\{\bM\}$-hypoelliptic at $x_0$ for all admissible weight sequences $\bM$.
\end{Cor}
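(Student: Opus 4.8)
\emph{Proof proposal.} The plan is to read off this statement from Corollary~\ref{MetivierCor1} by recognizing ordinary analytic hypoellipticity as the special case $\bM=\mathbf{G}^1$. First I would record that $\mathbf{G}^1=(k!)_k$ is itself an admissible weight sequence: condition \eqref{AnalyticInclusion} holds trivially with $C=h=1$, and \eqref{mg} follows from the elementary bound $(j+k)!\le 2^{j+k}j!\,k!$. Moreover, as noted in Section~1, $\Rou{\mathbf{G}^1}(\Omega)=\mathcal{A}(\Omega)$, so that ``$P$ is analytic hypoelliptic at $x_0$'' is exactly the assertion that $P$ is $\{\mathbf{G}^1\}$-hypoelliptic at $x_0$ in the sense of Section~2; note as well that an operator with analytic coefficients has coefficients of class $\{\mathbf{G}^1\}$.

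Next I would use that every admissible weight sequence $\bM$ satisfies \eqref{AnalyticInclusion}, which is precisely the relation $\mathbf{G}^1\preceq\bM$. Hence the pair $\mathbf{G}^1\preceq\bM$ consists of two admissible weight sequences, $P$ has $\Rou{\mathbf{G}^1}$-coefficients and satisfies \eqref{RightInverse}, and so Corollary~\ref{MetivierCor1} applies with no change: from $\{\mathbf{G}^1\}$-hypoellipticity of $P$ at $x_0$ one obtains $\{\bM\}$-hypoellipticity of $P$ at $x_0$. Since $\bM$ was an arbitrary admissible weight sequence, this is the claim.

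I do not expect any real obstacle here; the argument is a direct specialization of Corollary~\ref{MetivierCor1}. The only points needing a line of justification are the three bookkeeping facts above: that $\mathbf{G}^1$ is admissible, that $\{\mathbf{G}^1\}$-hypoellipticity coincides with analytic hypoellipticity, and that admissibility of $\bM$ is exactly the hypothesis $\mathbf{G}^1\preceq\bM$ required to invoke Corollary~\ref{MetivierCor1}.
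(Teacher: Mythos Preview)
Your proposal is correct and matches the paper's approach exactly: the paper presents this corollary simply with the words ``As a special case we obtain'' immediately after Corollary~\ref{MetivierCor1}, and your argument spells out precisely that specialization. One small wording quibble: in your final sentence, admissibility of $\bM$ is not \emph{exactly} the relation $\mathbf{G}^1\preceq\bM$ (it also includes \eqref{mg}); rather, admissibility \emph{implies} $\mathbf{G}^1\preceq\bM$ via \eqref{AnalyticInclusion}, which together with the admissibility of both $\mathbf{G}^1$ and $\bM$ is what Corollary~\ref{MetivierCor1} requires.
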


\section{Proof of Theorem \ref{MainThm}}
We start by introducing the Ehrenpreis-H\"ormander cut-off functions: For every open sets 
$V\Subset U\subseteq\R^n$ there exists a sequence $\chi_k\in \mathcal{D}(U)$ with $\chi_k\vert_V\equiv 1$
for all $k\in\N_0$ satisfying 
\begin{equation*}
	\left\lvert D^\alpha \chi_k(x)\right\rvert\leq Q^{\alp} k^{\alp},\qquad \alp\leq k.
\end{equation*}
We will call such a sequence an Ehrenpreis-H\"ormander cut-off sequence which is supported in $U$
and centered in $V$.
Ehrenpreis-H\"ormander cut-off sequences have been heavily used in local and microlocal 
regularity theory in the analytic and ultradifferentiable category.

\begin{Lem}\label{ComparisonLemma}
	Let $(\chi_k)_k$ be an Ehrenpreis-H\"ormander cut-off sequence supported in an open set 
	$U\subseteq\R^n$.
	If $\bM$ is a weight sequence satisfying \eqref{AnalyticInclusion}
	then there is a constant $\gamma>0$ such that
	for all $k\in\N$ and $u\in H^k(U)$ we have
	\begin{equation*}
		\trb\chi_ku\trb_{\R^n,\bM,k}\leq \gamma^{k}\trb u\trb_{U,\bM,k}.
	\end{equation*}
\end{Lem}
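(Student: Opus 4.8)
The plan is to expand $D^\alpha(\chi_k u)$ by the Leibniz rule and estimate each term using the bound on the derivatives of $\chi_k$ together with the hypothesis \eqref{AnalyticInclusion} in the form \eqref{AnalyticInclusion2}, namely $k\leq\delta\Lambda_k$. First I would fix $k\in\N$ and a multi-index $\alpha$ with $\alp\leq k$, and write
\[
D^\alpha(\chi_k u)=\sum_{\beta\leq\alpha}\binom{\alpha}{\beta}D^{\alpha-\beta}\chi_k\,D^\beta u.
\]
Using $|D^{\alpha-\beta}\chi_k|\leq Q^{|\alpha-\beta|}k^{|\alpha-\beta|}$ and then $k^{|\alpha-\beta|}\leq(\delta\Lambda_k)^{|\alpha-\beta|}$, each summand is bounded in $L^2(\R^n)$ by $\binom{\alpha}{\beta}(Q\delta)^{|\alpha-\beta|}\Lambda_k^{|\alpha-\beta|}\lVert D^\beta u\rVert_{L^2(U)}$. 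Multiplying by the weight $\Lambda_k^{k-\alp}$ from the definition of $\trb\cdot\trb$ and noting $k-\alp+|\alpha-\beta|=k-|\beta|$ (since $\alp=|\beta|+|\alpha-\beta|$), the $\Lambda_k$-powers combine exactly into the weight $\Lambda_k^{k-|\beta|}$ attached to $\lVert D^\beta u\rVert_{L^2(U)}$ in $\trb u\trb_{U,\bM,k}$.

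Next I would sum over $\alpha$ with $\alp\leq k$ and over $\beta\leq\alpha$, and reorganize the double sum so that $\beta$ runs over all multi-indices with $|\beta|\leq k$ while the complementary index $\delta':=\alpha-\beta$ runs over multi-indices with $|\delta'|\leq k-|\beta|$. For fixed $\beta$ the contribution is
\[
\Bigl(\sum_{|\delta'|\leq k-|\beta|}\binom{\beta+\delta'}{\beta}(Q\delta)^{|\delta'|}\Bigr)\Lambda_k^{k-|\beta|}\lVert D^\beta u\rVert_{L^2(U)}.
\]
The binomial factors $\binom{\beta+\delta'}{\beta}$ together with the number of multi-indices of a given length in $n$ variables produce at most $C^k$ for a dimensional constant $C$; more precisely one bounds $\sum_{|\delta'|\leq k}\binom{\beta+\delta'}{\beta}(Q\delta)^{|\delta'|}$ crudely by $(1+Q\delta)^{?}$-type estimates or simply by $(2Q\delta+2)^{|\beta|+|\delta'|}\cdot(\text{combinatorial count})\leq\gamma^k$ after absorbing everything into a single constant $\gamma=\gamma(n,Q,\delta)$, uniformly in $k$. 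This yields
\[
\trb\chi_k u\trb_{\R^n,\bM,k}\leq\gamma^k\sum_{|\beta|\leq k}\Lambda_k^{k-|\beta|}\lVert D^\beta u\rVert_{L^2(U)}=\gamma^k\trb u\trb_{U,\bM,k},
\]
which is the claim.

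The only genuinely delicate point is bookkeeping the combinatorial factors so that the accumulated constant is of the form $\gamma^k$ and does \emph{not} secretly depend on $k$ in a worse-than-exponential way: one must keep the count of multi-indices $\{|\delta'|=j\}$, which is $\binom{j+n-1}{n-1}\leq 2^{j+n-1}$, and the binomial coefficients $\binom{\alpha}{\beta}\leq 2^{\alp}\leq 2^k$ under control, and verify that every power that appears has exponent $\leq k$. This is routine but is where care is needed; the role of \eqref{AnalyticInclusion} (equivalently \eqref{AnalyticInclusion2}) is precisely to convert the $k^{|\alpha-\beta|}$ growth of the Ehrenpreis--H\"ormander derivatives into $\Lambda_k$-powers so that the weights match, and no property of $\bM$ beyond \eqref{AnalyticInclusion} is used.
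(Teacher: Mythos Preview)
Your proposal is correct and follows essentially the same approach as the paper: Leibniz rule, the bound $\lvert D^{\alpha-\beta}\chi_k\rvert\leq Q^{|\alpha-\beta|}k^{|\alpha-\beta|}$, conversion of $k^{|\alpha-\beta|}$ into $\Lambda_k^{|\alpha-\beta|}$ via \eqref{AnalyticInclusion2}, and a combinatorial bound of the form $\gamma^k$ on the remaining sum. The only cosmetic difference is that the paper immediately estimates each $\lVert D^{\alpha-\beta}u\rVert_{L^2(U)}\leq\Lambda_k^{|\alpha-\beta|-k}\trb u\trb_{U,\bM,k}$ and is left with the double sum $\sum_{\alp\leq k}\sum_{\beta\leq\alpha}\binom{\alpha}{\beta}(Q\delta)^{|\beta|}\leq\gamma^k$, whereas you keep the individual norms, interchange the order of summation, and recognize $\sum_{|\beta|\leq k}\Lambda_k^{k-|\beta|}\lVert D^\beta u\rVert_{L^2(U)}$ as $\trb u\trb_{U,\bM,k}$ at the end; both routes are equivalent.
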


\begin{proof}
	Note first if $u\in H^k(U)$ then $\chi_k u$ can be extended to  an element of $H^k(\R^n)$
	by setting $0$ outside $U$. We note also that \eqref{AnalyticInclusion2} gives that
	there is some $\delta>0$ such that $k\leq \delta \Lambda_k$.
	
	The Leibniz rule gives
	\begin{equation*}
		D^\alpha(\chi_k u)=\sum_{\beta\leq \alpha}\binom{\alpha}{\beta}D^\beta\chi_k
		D^{\alpha-\beta}u
	\end{equation*}
	Thus we obtain
	\begin{equation*}
		\begin{split}
			\lVert D^\alpha(\chi_k u)\rVert_{L^2(\R^n)}
			&\leq \sum_{\beta\leq \alpha}\binom{\alpha}{\beta}Q^{\lvert\beta\rvert} k^{\lvert\beta\rvert}
			\lVert D^{\alpha-\beta}u\rVert_{L^2(U)}\\
			&\leq \sum_{\beta\leq \alpha}\binom{\alpha}{\beta}Q^{\lvert\beta\rvert} \delta^{\lvert\beta\rvert} 
			\Lambda_k^{\lvert\beta\rvert} \Lambda_k^{\alp -\lvert\beta\rvert -k}
			\trb u\trb_{U,\bM,k}\\
			&=\sum_{\beta\leq \alpha}\binom{\alpha}{\beta}(Q\delta)^{\lvert\beta\rvert}
			\Lambda_k^{\alp-k}\trb u\trb_{U,\bM,k}.
		\end{split}
	\end{equation*}
	
	It follows that 
	\begin{equation*}
		\begin{split}
			\trb\chi_k u\trb_{\R^n,\bM,k}&= \sum_{\alp\leq k} \Lambda_k^{k-\alp}
			\lVert\chi_ku\rVert_{L^2(\R^n)}\\
			&\leq \left[\sum_{\alp\leq k}\sum_{\beta\leq \alpha}\binom{\alpha}{\beta}
			(Q\delta)^{\lvert\beta\rvert}\right]
			\trb u\trb_{U,\bM,k}
		\end{split}
	\end{equation*}
	and we have proven the Lemma since there is a constant $\gamma>0$ such that
	\begin{equation*}
		\sum_{\alp\leq k}\sum_{\beta\leq \alpha}\binom{\alpha}{\beta}
		(Q\delta)^{\lvert\beta\rvert}\leq \gamma^k.
	\end{equation*}
\end{proof}

\begin{Prop}\label{FourierTransform}
	Let $\bM$ be an admissible weight sequence,
	$\Omega$ be a neighborhood of $x_0$ 
	and let $E$ be a Banach space continuously injected in $L^2(\Omega)$. 
	If we suppose that there is an open set $U_0\subseteq\Omega$ such that
	$u\vert_{U_0}\in\Rou{\bM}(U_0)$ for all $u\in E$ then 
	for any $V\Subset U\Subset U_0$ and all  
	Ehrenpreis-H\"ormander cut-off sequence $(\chi_k)_k$ supported in $U$ and 
	centered on $V$ 
	there exist constants $C,\gamma>0$ such that
	\begin{gather}\label{FourierEq1}
		\lvert\xi\rvert^k\chi_k u\in L^2(\R^n),\\
		\label{FourierEq2}	
		\left\lVert\lvert\xi\rvert^k\widehat{\chi_ku}(\xi)\right\rVert_{L^2(\R^n)}
		\leq C\gamma^k M_k\lVert u\rVert_E
	\end{gather}
	for all $k\in\N$ and $u\in E$.
\end{Prop}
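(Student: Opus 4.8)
The strategy is a closed-graph / Baire-category argument combined with the weight-function estimate from Lemma \ref{FunctionLemma}. First I would fix $V\Subset U\Subset U_0$ and an Ehrenpreis--H\"ormander cut-off sequence $(\chi_k)_k$ supported in $U$ and centered on $V$. For each fixed $k$, consider the linear map $E\ni u\mapsto \chi_k u\in H^k(\R^n)$. By hypothesis $u|_{U_0}\in\Rou{\bM}(U_0)$ for every $u\in E$, so in particular $u|_U\in H^k(U)$ for every $k$, whence $\chi_k u\in H^k(\R^n)$ (extending by $0$ outside $U$, as in Lemma \ref{ComparisonLemma}); this already gives the qualitative statement \eqref{FourierEq1}, since $|\xi|^k\widehat{\chi_k u}\in L^2$ is equivalent to $\chi_k u\in H^k(\R^n)$. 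The map $u\mapsto\chi_k u$ is closed: if $u_n\to u$ in $E$ and $\chi_k u_n\to w$ in $H^k(\R^n)$, then (since $E\hookrightarrow L^2(\Omega)$ continuously) $u_n\to u$ in $L^2$, so $\chi_k u_n\to \chi_k u$ in $\Dli$, forcing $w=\chi_k u$. As $E$ is a Banach space and $H^k(\R^n)$ is Banach, the closed graph theorem yields, for each $k$, a constant $C_k>0$ with $\|\chi_k u\|_{H^k(\R^n)}\le C_k\|u\|_E$ for all $u\in E$.

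The real content is to make the dependence of $C_k$ on $k$ be of the form $C\gamma^k M_k$. Here I would use a uniform-boundedness reformulation: consider, for each $k$, the scaled map $T_k\colon u\mapsto (\gamma_0 ^{-k}M_k^{-1})\,\chi_k u$, viewed into the Banach space $\CB_{\bM,\gamma_1}(\R^n)$ for a suitable fixed $\gamma_1$ — or, more directly, into $H^k(\R^n)$ with the norm $\|w\|_{H^k}$ divided by $M_k\gamma_0^k$ — and show that $\{T_k\}_k$ is pointwise bounded on $E$, hence uniformly bounded by Banach--Steinhaus. Pointwise boundedness is exactly the assertion that for each fixed $u\in E$ one has $\sup_k \gamma_0^{-k}M_k^{-1}\|\chi_k u\|_{H^k(\R^n)}<\infty$; this I would extract from $u|_{U_0}\in\Rou{\bM}(U_0)$. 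Indeed, pick $U'$ with $U\Subset U'\Subset U_0$; then there are $C_u,h_u>0$ with $\|D^\beta u\|_{L^\infty(U')}\le C_u h_u^{\alp}M_{\alp}$... more precisely $\|D^\beta u\|_{L^\infty(\overline U)}\le C_u h_u^{|\beta|}M_{|\beta|}$. Combining with the Leibniz estimate for $\chi_k$ exactly as in the proof of Lemma \ref{ComparisonLemma} (using $|D^\alpha\chi_k|\le Q^{\alp}k^{\alp}$ and $k\le\delta\Lambda_k\le\delta\mu_k$ from \eqref{AnalyticInclusion2}), together with the submultiplicativity \eqref{mg} of $\bM$ to absorb $M_{|\beta|}M_{|\alpha-\beta|}\le A^{\alp}M_{\alp}$ and the bound $\mu_k^{|\beta|}\le D^{|\beta|}M_k/M_{k-|\beta|}\le D^{|\beta|} A^k M_k/M_{|\beta|}\cdot M_{|\beta|}$... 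I would organize this so that $\|D^\alpha(\chi_k u)\|_{L^2(\R^n)}\le C_u (C'h_u)^{k}M_k$ for all $\alp\le k$, uniformly in $u$ up to the $u$-dependent constants $C_u,h_u$. That is precisely the pointwise-boundedness input for Banach--Steinhaus, giving a single $\gamma$ and $C$ with $\|\chi_k u\|_{H^k(\R^n)}\le C\gamma^k M_k\|u\|_E$, and since $\||\xi|^k\widehat{\chi_k u}\|_{L^2}\le \|\chi_k u\|_{H^k(\R^n)}$ (up to a dimensional constant absorbed into $C$, using Plancherel), this gives \eqref{FourierEq2}.

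Two technical points deserve care. First, to run Banach--Steinhaus with a $k$-independent target one should phrase the family $\{T_k\}$ as maps into a \emph{fixed} Banach space; the clean way is to send $u\mapsto \big(\gamma_0^{-k}M_k^{-1}\||\xi|^k\widehat{\chi_k u}\|_{L^2}\big)_{k}\in \ell^\infty(\N)$, or to note that it suffices to bound each $T_k$ individually and the pointwise bound is uniform over $k$ for fixed $u$, then invoke the principle of uniform boundedness in the form: a family of bounded operators $E\to \R$ (here $u\mapsto \gamma_0^{-k}M_k^{-1}\||\xi|^k\widehat{\chi_k u}\|_{L^2}$, a sublinear functional, or its associated operator norm) that is pointwise bounded is uniformly bounded. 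Second, the constant $\gamma_0$ I choose at the outset must be large enough that the deterministic Leibniz/weight-sequence estimate for $\chi_k u$ closes; this is where Lemma \ref{FunctionLemma} and the equivalent form $\mu_k\le D\Lambda_k$ of \eqref{mg} enter, exactly mirroring the computation in Lemma \ref{ComparisonLemma}. I expect the main obstacle to be precisely the bookkeeping that converts "$u$ is of class $\{\bM\}$ on $U_0$ (with $u$-dependent constants)" into a \emph{uniform-in-$k$} estimate $\|\chi_k u\|_{H^k}\le C_u\gamma_u^k M_k$ that is legitimately of Banach--Steinhaus type — i.e. isolating the $k$-dependence $\gamma^k M_k$ as a fixed normalizing weight and checking it is not accidentally too small to absorb the binomial sums; once that normalization is correctly chosen, the functional-analytic step is routine.
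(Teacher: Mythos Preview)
Your plan contains a genuine gap at the Banach--Steinhaus step, and it is not a bookkeeping issue. You want to apply uniform boundedness to the family $T_k u = \gamma_0^{-k}M_k^{-1}\chi_k u$ with a \emph{fixed} $\gamma_0$, and for this you need $\sup_k \gamma_0^{-k}M_k^{-1}\|\chi_k u\|_{H^k}<\infty$ for every $u\in E$. But the Leibniz computation you describe gives only $\|\chi_k u\|_{H^k}\le C_u(C'h_u)^k M_k$ with a \emph{$u$-dependent} base $h_u$ coming from the hypothesis $u|_{U_0}\in\Rou{\bM}(U_0)$. If $C'h_u>\gamma_0$, then $\gamma_0^{-k}M_k^{-1}\|\chi_k u\|_{H^k}$ may grow like $(C'h_u/\gamma_0)^k\to\infty$, and since there is no a priori upper bound on $h_u$ over $u\in E$, no single choice of $\gamma_0$ yields pointwise boundedness. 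The obstacle you flagged is therefore not that the normalization might be ``accidentally too small'': it is structurally too small for some $u$, and the argument as written cannot close.

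The paper's proof avoids this by applying the closed graph theorem once, but to the restriction map $T_U\colon E\to \CB^{\{\bM\}}(U)$ with target the full (DFS)-space $\CB^{\{\bM\}}(U)=\mathrm{ind}_{h}\,\CB_{\bM,h}(U)$ (a webbed space, so De Wilde's closed graph theorem applies). Continuity of $T_U$ then forces the image of the unit ball of $E$ to be bounded in $\CB^{\{\bM\}}(U)$, and regularity of (DFS)-spaces gives a \emph{single} step $\CB_{\bM,h}(U)$ containing that image; a second closed-graph argument between Banach spaces produces a uniform constant. This is precisely the mechanism that upgrades the $u$-dependent $h_u$ to a uniform $h$, after which the Leibniz estimate you wrote goes through verbatim with $\gamma=Q\delta+h$. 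If you prefer to stay with Baire-type arguments, the workable variant is to write $E=\bigcup_{h,N\in\N}\{u:\|\chi_k u\|_{H^k}\le N h^k M_k\ \forall k\}$, check these sets are closed (using your step-one closed-graph bound for each fixed $k$), and extract a uniform $h$ from one with nonempty interior; but this is essentially the (DFS) argument in disguise, not the Banach--Steinhaus scheme you outlined.
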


\begin{proof}
We have that the restriction map 
\begin{gather*}
	T_U:E\longrightarrow \CB^{\{\bM\}}(U),\\
	f\longmapsto f\vert_U
\end{gather*}
for $U\Subset U_0$ is a well-defined mapping.
If $(f_j)_j$ is a sequence which converges in $E$ to $0$ and $f_j\vert_U\rightarrow g$ in
$\CB^{\{\bM\}}(U)$ but this implies that $\lVert f_j-g\rVert_{L^2(U)}\rightarrow 0$ since 
$\lVert\,.\,\rVert_{L^2(U)}\leq C\sup_{U}\lvert\,.\,\rvert$ for a constant only depending on $U$.
On the other hand, $\lVert{f_j}\rVert_{L^2(\Omega)}\rightarrow 0$ since $E$ is continuously injected in $L^2(\Omega)$.
It follows that $g=0$ and thus the graph of $T_U$ is closed.
By the version of the closed graph theorem given in \cite[p.56, (1)]{Koethe1979} we have that
$T_U$ is continuous.
If we denote the unit ball in $E$ by $B_1$ then we deduce that $T_U(B_1)$ is bounded and thence
there is
some $h>0$ such that $T_U(B_1)\subseteq \CB_{\bM,h}(U)$ which gives
that $T_U(B)\subseteq \CB_{\bM,h}(U)$. The closed graph theorem for Banach spaces
implies now that $T_U$ is continuous from $E$ to $\CB_{\bM,h}(U)$.
We denote the norm of this map by $C_T$ and obtain for $\alp=k$ that
\begin{equation*}
	\begin{split}
		\left\lVert D^\alpha\bigl(\chi_ku\bigr)\right\rVert_{L^2(\R^n)}
		&\leq \sum_{\beta\leq\alpha}
		C_U\binom{\alpha}{\beta}(Qk)^{\lvert\beta\rvert} h^{\alp-\lvert\beta\vert}
		M_{\alp-\lvert\beta\rvert}
		\lVert u\rVert_{U,\bM,h}\\
		&\leq \sum_{\beta\leq \alpha} \binom{\alpha}{\beta}(Q \delta)^{\lvert\beta\rvert}
		 \Lambda_k^{\lvert\beta\rvert}
		h^{\alp-\lvert\beta\rvert}\Lambda_k^{\alp-\lvert\beta\rvert}\lVert u\rVert_{U,\bM,h}\\
		&\leq \sum_{\beta\leq \alpha} \binom{\alpha}{\beta}(Q\delta)^{\lvert\beta\rvert}
		h^{\alp-\lvert\beta\rvert} M_{\alp}
		C\lVert u\rVert_{E}\\
		&\leq C\left(Q\delta+h\right)^k M_k\lVert u\rVert_{E}.
	\end{split}
\end{equation*}
since $\Lambda_k$ is increasing and  by \eqref{AnalyticInclusion2}
there is some $\delta>0$ such that $k\leq \delta\Lambda_k$.
It follows that
\begin{equation*}
	\left\lVert\lvert\xi\rvert^k\widehat{\chi_ku}\right\rVert^2_{L^2(\R^n)}\leq n^kC^2
	h_1^{2k}M_k^2
	\lVert u\rVert^2_{B}
\end{equation*}
where $h_1=Q\delta+h$.
\end{proof}

If $\bM$ is a weight sequence it is easy to see that 
\begin{equation}\label{FourierNormEstimate}
	\begin{split}
		\int\!\bigl(\Lambda_k+\lvert\xi\rvert\bigr)^{2k}\left\lvert \hat{u}(\xi)\right\rvert^2\,d\xi
		&=\int\left\lvert\bigl(\Lambda_k+\lvert\xi\rvert\bigr)^k\hat{u}(\xi)\right\rvert^2\,d\xi\\
		&\leq 
		\sum_{\ell=0}^k\binom{k}{\ell}^2
		\Lambda_k^{2(k-\ell)}\int\left\lvert\lvert\xi\rvert^\ell \hat{u}(\xi)\right\rvert^2\,d\xi\\
		&\leq 4^k\sum_{\alp\leq k} \Lambda^{2(k-\alp)}\lVert D^\alpha u\rVert^2_{L^2(\R^n)}\\
		&\leq 4^k \bigl(\trb u\trb_{\R^n,\bM,k}\bigr)^2
	\end{split}
\end{equation}
for every $u\in H^k(\R^n)$ and $k\in \Z_+$.
We are also going to use the space
\begin{equation*}
	G_{\{\bM\}}=\left\{u\in L^2(\R^n)\mid e^{\omega_\bM(\lvert\xi\rvert)}\hat{u}\in L^2(\R^n)\right\}.
\end{equation*}
Then $G_{\{\bM\}}\subseteq\Rou{\bM}(\R^n) $ is a Hilbert space with respect to the topology inherited by
$L^2(\R^n)$. 

%Note that $G$ is a Hilbert space of analytic functions with respect to the induced norm.
\begin{Lem}\label{InterpolationLemma1}
	Let $\bM$ be an admissible weight sequence
	and $k\in\N$. Then every $u\in H^k(\R^n)$
	can be written in the form $u=\sum_{j=0}^\infty u_j$ with the $u_j\in G_{\{\bM\}}$ satisfying:
	\begin{equation}\label{InterpolationCondition1}
		\sum_{j=0}^{\infty} \Lambda_{k_j}^{2k}
		\left(\lVert u_j\rVert_{L^2(\R^n)}^2+
		e^{-2\omega_\bM(\Lambda_{k_j})}\lVert u_j\rVert_{G}^2\right)
		\leq C\gamma^k \bigl(\trb u\trb_{\R^n,\bM,k}\bigr)^2
	\end{equation}
	where the constants $C,\gamma>0$ are independent of $k,j$ and $(k_j)_j$ is the sequence from
	Lemma \ref{SequenceLemma}.
\end{Lem}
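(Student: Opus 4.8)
The plan is to construct the decomposition $u=\sum_{j}u_{j}$ by a Littlewood--Paley type splitting in frequency, cutting $\R^{n}$ into spherical shells whose radii are exactly the numbers $\Lambda_{k_{j}}$ produced by Lemma \ref{SequenceLemma} (with $\sigma>1$ the constant from that lemma). Explicitly, I would set $A_{0}=\{\xi\in\R^{n}:\,|\xi|\le\Lambda_{k_{0}}\}$ and $A_{j}=\{\xi:\,\Lambda_{k_{j-1}}<|\xi|\le\Lambda_{k_{j}}\}$ for $j\ge1$, and let $u_{j}$ be defined by $\widehat{u_{j}}=\mathbf{1}_{A_{j}}\widehat u$. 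Since $\Lambda_{k_{j-1}}\le\Lambda_{k_{j}}$ — a consequence of $\sigma^{j-1}\Lambda_{k}\le\Lambda_{k_{j}}\le\sigma^{j}\Lambda_{k}$ together with $\Lambda_{k_{0}}\le\Lambda_{k}$ — the sets $A_{j}$ are pairwise disjoint, and because $\Lambda_{k_{j}}\to\infty$ they exhaust $\R^{n}$; hence $\sum_{j}\widehat{u_{j}}=\widehat u$ and $u=\sum_{j}u_{j}$ with convergence in $L^{2}(\R^{n})$.

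Next I would check that each $u_{j}$ lies in $G_{\{\bM\}}$ and that the weighted summand collapses to an unweighted one. On $A_{j}$ we have $|\xi|\le\Lambda_{k_{j}}$, so, $\omega_{\bM}$ being increasing, $e^{2\omega_{\bM}(|\xi|)}\le e^{2\omega_{\bM}(\Lambda_{k_{j}})}$; in particular $\lVert u_{j}\rVert_{G}\le e^{\omega_{\bM}(\Lambda_{k_{j}})}\lVert u_{j}\rVert_{L^{2}(\R^{n})}<\infty$, so indeed $u_{j}\in G_{\{\bM\}}$, and
\[
\Lambda_{k_{j}}^{2k}e^{-2\omega_{\bM}(\Lambda_{k_{j}})}\lVert u_{j}\rVert_{G}^{2}
=\Lambda_{k_{j}}^{2k}e^{-2\omega_{\bM}(\Lambda_{k_{j}})}\int_{A_{j}}e^{2\omega_{\bM}(|\xi|)}|\widehat u(\xi)|^{2}\,d\xi
\le\Lambda_{k_{j}}^{2k}\int_{A_{j}}|\widehat u(\xi)|^{2}\,d\xi ,
\]
which has the same shape as $\Lambda_{k_{j}}^{2k}\lVert u_{j}\rVert_{L^{2}(\R^{n})}^{2}=\Lambda_{k_{j}}^{2k}\int_{A_{j}}|\widehat u|^{2}\,d\xi$.

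The remaining point is to absorb the weight $\Lambda_{k_{j}}^{2k}$ into $(\Lambda_{k}+|\xi|)^{2k}$ on $A_{j}$. For this I would establish the elementary bound $\Lambda_{k_{j}}\le\sigma^{2}(\Lambda_{k}+|\xi|)$ for every $\xi\in A_{j}$: for $j=0,1$ it follows from $\Lambda_{k_{j}}\le\sigma\Lambda_{k}$, while for $j\ge2$ one uses $\Lambda_{k_{j}}\le\sigma^{j}\Lambda_{k}=\sigma^{2}\,\sigma^{j-2}\Lambda_{k}\le\sigma^{2}\Lambda_{k_{j-1}}<\sigma^{2}|\xi|$, the last step being the defining inequality $|\xi|>\Lambda_{k_{j-1}}\ge\sigma^{j-2}\Lambda_{k}$ on $A_{j}$. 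Consequently both $\Lambda_{k_{j}}^{2k}\lVert u_{j}\rVert_{L^{2}(\R^{n})}^{2}$ and $\Lambda_{k_{j}}^{2k}e^{-2\omega_{\bM}(\Lambda_{k_{j}})}\lVert u_{j}\rVert_{G}^{2}$ are at most $\sigma^{4k}\int_{A_{j}}(\Lambda_{k}+|\xi|)^{2k}|\widehat u(\xi)|^{2}\,d\xi$. Summing over $j$, using that the $A_{j}$ partition $\R^{n}$, and then applying \eqref{FourierNormEstimate} (legitimate since $u\in H^{k}(\R^{n})$) yields
\[
\sum_{j=0}^{\infty}\Lambda_{k_{j}}^{2k}\Bigl(\lVert u_{j}\rVert_{L^{2}(\R^{n})}^{2}+e^{-2\omega_{\bM}(\Lambda_{k_{j}})}\lVert u_{j}\rVert_{G}^{2}\Bigr)
\le 2\sigma^{4k}\int_{\R^{n}}(\Lambda_{k}+|\xi|)^{2k}|\widehat u(\xi)|^{2}\,d\xi
\le 2\,(4\sigma^{4})^{k}\bigl(\trb u\trb_{\R^{n},\bM,k}\bigr)^{2},
\]
so \eqref{InterpolationCondition1} holds with $C=2$ and $\gamma=4\sigma^{4}$ (which are independent of $k$, $j$ and $(k_{j})_{j}$, as required).

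I expect that the only genuine subtlety is the choice of the cut radii, and this is exactly where Lemma \ref{SequenceLemma} does the work. The decomposition is squeezed from two opposite sides: the spectrum of $u_{j}$ must stay inside $\{|\xi|\le\Lambda_{k_{j}}\}$ so that the factor $e^{-2\omega_{\bM}(\Lambda_{k_{j}})}$ can swallow the growth $e^{2\omega_{\bM}(|\xi|)}$ coming from the $G$-norm, yet $\Lambda_{k_{j}}$ must remain comparable to $|\xi|$ on that spectrum so that passing to $\trb\,\cdot\,\trb_{\R^{n},\bM,k}$ costs no more than a factor of the form $C^{k}$. Together these force the consecutive radii $\Lambda_{k_{j}}$ to have uniformly bounded ratios, which is precisely the property of the subsequence extracted in Lemma \ref{SequenceLemma}; with that subsequence in hand, every estimate above is routine.
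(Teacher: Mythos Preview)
Your proposal is correct and is essentially the paper's own argument: the same frequency decomposition into shells of radii $\Lambda_{k_j}$, the same bound $\Lambda_{k_j}\le\sigma^{2}(\Lambda_k+|\xi|)$ on $A_j$ (split into the cases $j\le1$ and $j\ge2$ exactly as you do), and the same appeal to \eqref{FourierNormEstimate} to finish. Your formulation with half-open shells $A_j$ is in fact slightly cleaner than the paper's convention $k_{-1}=0$.
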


\begin{proof}
	We may set $k_{-1}=0$ and
	\begin{equation*}
		u_j(x)=(2\pi)^{-n}\negthickspace\int\limits_{\Lambda_{k_{j-1}}
			\leq\lvert\xi\rvert\leq \Lambda_{k_j}}
		e^{ix\xi}\hat{u}(\xi)\,d\xi.
	\end{equation*}
	For $\lvert\xi\rvert\leq \Lambda_{k_j}$ we conclude that
	\begin{equation*}
		\lVert u_j\rVert_{G}^2\leq e^{\omega_\bM(\Lambda_{k_j})}\lVert u_j\rVert_{L^2(\R^n)}.
	\end{equation*}
	
	On the other hand, in the case $\Lambda_{k_{j-1}}\leq \lvert\xi\rvert$ we note first that 
	for $j\geq 2$ we have
	\begin{equation*}
		\Lambda_{k_j}\leq \sigma^j\Lambda_k\leq \sigma^2\Lambda_{k_{j-1}}\leq
		\sigma^2(\Lambda_k+\lvert\xi\rvert).
	\end{equation*}
	Moreover, $\Lambda_{k_1}\leq \sigma\Lambda_k\leq \sigma^2(\Lambda_k+\lvert\xi\vert)$ and
	$\Lambda_{k_0}=\Lambda_k\leq \sigma^2(\Lambda_k+\lvert\xi\rvert)$ since $\sigma\geq1$ and 
	$\lvert\xi\rvert\geq 0$.
	
	Hence, due to  \eqref{FourierNormEstimate},
	\begin{equation*}
		\sum_{j=0}^{\infty}\Lambda_{k_j}^{2k}
		\lVert u_j\rVert_{L^2(\R^n)}^2
		\leq \sigma^{4k}\int\!\left(\lvert\xi\rvert +\Lambda_k\right)^{2k}
		\left\lvert\hat{u}(\xi)\right\rvert^2\,d\xi 
		\leq \gamma^k\left(\trb u\trb_{\R^n,\bM,k}\right)^2
	\end{equation*}
	for some $\gamma>0$.
\end{proof}
\begin{Lem}\label{InterpolationLemma2}
	Assume that $\bM$ is an admissible weight sequence and that
	$E$ is a Banach space which is continuously injected in $L^2(\Omega)$. 
	If there is an open set $U_0\subseteq\Omega$ such that $u\vert_{U_0}\in\Rou{\bM}(U_0)$
	for every $u\in E$
	then for all  $V\Subset U_0$ 
	there exists a constant $C$ such that for all $k\in\N$ and every sequence $u_j\in E$, $j\in\Z_+$,
	satisfying
	\begin{equation}\label{InterpolationCondition2}
		\sum_{j=0}^{\infty}
		\Lambda_{k_j}^{2k}\left(\lVert u_j\rVert_{L^2(\Omega)}^2
		+e^{-2\omega_\bM(\Lambda_{k_j})}\lVert u_j\rVert_{E}^2\right)
		=\Phi^2_k(\mathbf{u})<\infty
	\end{equation}
	the series $u=\sum_j u_j$ converges in $L^2(\Omega)$ and $u\vert_V\in H^k(V)$ with
	\begin{equation*}
		\lVert u\vert_V\rVert_{H^k(V)}\leq C^{k+1}\Phi_k(\mathbf{u}).
	\end{equation*}
\end{Lem}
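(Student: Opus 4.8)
The plan is to exploit the functional-analytic setup from Proposition \ref{FourierTransform}: by exactly the same closed-graph argument there, the restriction map $E \to \CB^{\{\bM\}}(U)$, $f \mapsto f|_U$, lands in some Banach space $\CB_{\bM,h}(U)$ for a suitable $h>0$ and is continuous, with norm $C_T$. Fix $V \Subset U \Subset U_0$ and an Ehrenpreis-H\"ormander cut-off sequence $(\chi_k)_k$ supported in $U$ and centered on $V$; then $\chi_k u|_V = u|_V$, so it suffices to estimate $\|\chi_k u\|_{H^k(\R^n)}$. Since $u = \sum_j u_j$ in $L^2(\Omega)$ (convergence being immediate from \eqref{InterpolationCondition2}, which forces $\sum_j \|u_j\|_{L^2(\Omega)} < \infty$ because $\Lambda_{k_j} \ge \Lambda_{k_0} \ge \Lambda_1 \ge 1$ and the weights grow), and since $\chi_k$ has compact support in $U$, we have $\chi_k u = \sum_j \chi_k u_j$ in $L^2(\R^n)$, and we estimate term by term in $H^k$.

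The key point is a dyadic-type estimate for each $\chi_k u_j$ that interpolates between two regimes governed by the cut-off level $\Lambda_{k_j}$. On one hand, using the analytic-inclusion estimate $k \le \delta \Lambda_k \le \delta \Lambda_{k_j}$ (from \eqref{AnalyticInclusion2}), the Leibniz rule applied to $D^\alpha(\chi_k u_j)$ for $|\alpha| = \ell \le k$ produces a factor $(Q\delta \Lambda_{k_j})^{|\beta|}$ from the derivatives of $\chi_k$ and the remaining $D^{\alpha-\beta} u_j$ is controlled by $\|u_j\|_{\CB_{\bM,h}(U)} \le C_T \|u_j\|_E$ times $h^{|\alpha-\beta|} M_{|\alpha-\beta|}$; summing over $\beta$ and using $M_{|\alpha-\beta|} \le M_k$, $h^{|\alpha-\beta|}\Lambda_{k_j}^{|\beta|} \le (h+\Lambda_{k_j})^k$ gives a bound of the shape $C^{k+1} (\Lambda_{k_j})^k\, e^{\omega_\bM(\Lambda_{k_j})}$ — more precisely one uses $M_k = \Lambda_k^k$ together with the defining property $e^{\omega_\bM(\Lambda_{k_j})} \ge \Lambda_{k_j}^m / M_m$ for all $m$ — against $\|u_j\|_E$. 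On the other hand, for the low-frequency content one bounds $\|\lvert\xi\rvert^\ell \widehat{\chi_k u_j}\|_{L^2}$ directly by $\Lambda_{k_j}^\ell \|\chi_k u_j\|_{L^2} \lesssim (Q\delta\Lambda_{k_j})^{k}\|u_j\|_{L^2(\Omega)}$, again using compact support of $\chi_k$ and $k \le \delta\Lambda_{k_j}$. Combining, each term satisfies, for some absolute $C$,
\begin{equation*}
	\|\chi_k u_j\|_{H^k(\R^n)} \le C^{k+1}\, \Lambda_{k_j}^{k}\left(\|u_j\|_{L^2(\Omega)} + e^{-\omega_\bM(\Lambda_{k_j})}\|u_j\|_E\right).
\end{equation*}

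Finally I would sum over $j$: by the Cauchy–Schwarz inequality in $j$, inserting and removing a factor $\Lambda_{k_j}^{-\epsilon j}$ — or more simply using that $\Lambda_{k_j} \ge \sigma^{j-1}\Lambda_k$ from Lemma \ref{SequenceLemma}, so that the trivial bound $\Lambda_{k_j}^k \le \sigma^{jk}\Lambda_{k_j}^k$ is wasteful but one instead pulls out $\sum_j \Lambda_{k_j}^{-1}$-type convergence — one gets
\begin{equation*}
	\|u|_V\|_{H^k(V)} = \|\chi_k u\|_{H^k(\R^n)} \le \sum_{j=0}^\infty \|\chi_k u_j\|_{H^k(\R^n)} \le C^{k+1} \left(\sum_{j=0}^\infty \Lambda_{k_j}^{2k}\bigl(\|u_j\|_{L^2(\Omega)}^2 + e^{-2\omega_\bM(\Lambda_{k_j})}\|u_j\|_E^2\bigr)\right)^{1/2} = C^{k+1}\Phi_k(\mathbf{u}),
\end{equation*}
after absorbing the $\ell^2$–$\ell^1$ passage into $C^{k+1}$ via the geometric growth $\Lambda_{k_j} \ge \sigma^{j-1}\Lambda_k \ge \sigma^{j-1}$ (so $\sum_j \Lambda_{k_j}^{-2}\le \sum_j \sigma^{-2(j-1)} < \infty$, and one splits $\Lambda_{k_j}^k = \Lambda_{k_j}^{k+1}\Lambda_{k_j}^{-1}$). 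I expect the main obstacle to be bookkeeping the constants so that the final bound is genuinely of the form $C^{k+1}$ and not something like $C^{k+1}$ times an extra $k$-dependent factor: this forces careful use of $k \le \delta\Lambda_{k_j}$ everywhere a polynomial-in-$k$ factor (binomial coefficients, the number of multi-indices of length $\le k$, the $\ell^2$-to-$\ell^1$ constant) would otherwise appear, converting each such factor into a harmless $C^k$. Once that discipline is maintained, the estimate closes.
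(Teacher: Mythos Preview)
Your proposal has a genuine gap in the termwise estimate, and the root cause is the choice of cut-off: you multiply every $u_j$ by the \emph{same} $\chi_k$, whereas the paper multiplies $u_j$ by $\chi_{k_j}$. With $\chi_k$ you only control derivatives of the cut-off up to order $k$, so the Leibniz computation you sketch gives at best
\[
\lVert D^\alpha(\chi_k u_j)\rVert_{L^2}\le C\,\bigl((Q\delta+h)\Lambda_{k_j}\bigr)^{|\alpha|}\,\lVert u_j\rVert_E,\qquad |\alpha|\le k,
\]
hence $\lVert \chi_k u_j\rVert_{H^k}\le C^{k+1}\Lambda_{k_j}^{k}\lVert u_j\rVert_E$ with \emph{no} factor $e^{-\omega_\bM(\Lambda_{k_j})}$. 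The identity $e^{\omega_\bM(\Lambda_{k_j})}\ge \Lambda_{k_j}^m/M_m$ you invoke is a \emph{lower} bound on $M_m$ and cannot manufacture the needed decay; there is no way to extract $e^{-\omega_\bM(\Lambda_{k_j})}$ from derivatives of order $\le k$ only. The paper gets that factor precisely by using $\chi_{k_j}$ and applying Proposition~\ref{FourierTransform} at level $k_j$: one obtains $\bigl\lVert(|\xi|/(\gamma\Lambda_{k_j}))^{k_j}\widehat{\chi_{k_j}u_j}\bigr\rVert_{L^2}\le C_0\lVert u_j\rVert_E$, and it is the exponent $k_j\gg k$ (combined with Lemma~\ref{FunctionLemma}, $\omega_\bM(\Lambda_{k_j})\le Hk_j$) that produces the decay $e^{-\omega_\bM(\Lambda_{k_j})}$ on the high-frequency part.

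There is a second, related problem in your final summation. Even granting the (false) termwise bound, you are left with $\sum_j A_j$ where $A_j=\Lambda_{k_j}^k\bigl(\lVert u_j\rVert_{L^2}+e^{-\omega_\bM(\Lambda_{k_j})}\lVert u_j\rVert_E\bigr)$, while $\Phi_k(\mathbf{u})^2\asymp\sum_j A_j^2$. Cauchy--Schwarz in $j$ needs an auxiliary $\ell^2$ sequence, but the splitting $\Lambda_{k_j}^k=\Lambda_{k_j}^{k+1}\Lambda_{k_j}^{-1}$ you propose yields $\sum_j A_j\le\bigl(\sum_j\Lambda_{k_j}^{-2}\bigr)^{1/2}\bigl(\sum_j\Lambda_{k_j}^{2(k+1)}(\cdots)^2\bigr)^{1/2}$, and the second factor is \emph{not} controlled by $\Phi_k$ (the exponent is $2k+2$, not $2k$). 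The paper sidesteps this entirely: instead of summing $H^k$-norms, it works on the Fourier side, writes $|\xi|^k\hat v(\xi)=\sum_j(1+\theta(j,\xi))^{-1}g_j(\xi)|\xi|^k$ with $\theta(j,\xi)=e^{-\omega_\bM(\Lambda_{k_j})}(|\xi|/(\gamma\Lambda_{k_j}))^{k_j}$, applies Cauchy--Schwarz \emph{pointwise in $\xi$}, and reduces everything to the uniform bound $\sup_\xi\sum_j(|\xi|/\Lambda_{k_j})^{2k}(1+\theta(j,\xi))^{-2}\le C^{k+1}$, proved by splitting $j$ according to whether $|\xi|\lessgtr e^{2H}\gamma\Lambda_{k_j}$ and using Lemma~\ref{SequenceLemma} and Lemma~\ref{FunctionLemma}. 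Both the choice of $\chi_{k_j}$ and this Littlewood--Paley-type multiplier argument are essential and are missing from your plan.
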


\begin{proof}
	For $k=1$ the condition \eqref{InterpolationCondition2} implies that 
	$u$ converges absolutely in $L^2(\Omega)$.
	By Proposition \ref{FourierTransform}
	we have that for all open sets $V\Subset U\Subset \Omega$
	and for every Ehrenpreis-H\"ormander cut-off sequence $\chi_k\in\mathcal{D}(U)$ centered in $V$
	there are constants
	$C_0,\gamma>0$ such that for all $k\in\Z_+$ and all $u\in B$:
	\begin{equation}\label{Estimate1}
	\left\lVert\left(\frac{\lvert\xi\rvert}{\gamma\Lambda_k}\right)^k\widehat{\chi_ku}(\xi)
	\right\rVert_{L^2(\R^n)}
		\leq C_0\lVert u\rVert_{B}.
	\end{equation}
	We introduce the following functions:
	\begin{align*}
		\theta(j,\xi)&=e^{-\Lambda_{k_j}}
		\left(\frac{\lvert\xi\rvert}{\gamma\Lambda_{k_j}}\right)^{k_j}\\
		g_j(\xi)&=\left(1+\theta(j,\xi)\right)\widehat{\chi_{k_j}u_j}.
	\end{align*}
	By \eqref{Estimate1} we have that
	\begin{equation*}
		\lVert g_j\rVert_{L^2(\R^n)}\leq \lVert u_j\lVert_{L^2(\Omega)}+C_0e^{-k_j}
		\lVert u_j\lVert_{B}
	\end{equation*}
	which implies that
	\begin{equation}\label{Aim1}
		\begin{split}
			\sum_{j=0}^\infty \Lambda_{k_j}^{2k}\lVert g_j\rVert^2_{L^2(\R^n)}
			&\leq \sum_{j=0}^\infty \Lambda_{k_j}^{2k}
			\left[\lVert u_j\rVert_{L^2(\Omega)}
			+C_0e^{-\omega_\bM(\Lambda_{k_j})}\lVert u_j\rVert_{B}\right]^2\\
			&\leq \bigl(1+C_0^2\bigr)\Phi_k(\mathbf{u})^2<\infty.
		\end{split}
	\end{equation}
	
	It is clear that the function $v=\sum_{j=0}^\infty \chi_{k_j}u_j$ coincides with $u$ on $V$.
	It suffices to show that $v\in H^k(\R^n)$ satisfies the estimate:
	\begin{equation*}
		\lVert v\rVert_{H^k(\R^n)}\leq C^{k+1}\Phi_k(\mathbf{u}).
	\end{equation*}
	We have that
	\begin{equation}
		\lVert v\rVert_{L^2(\R^n)}\leq \sum_{j=0}^\infty
		\lVert u_j\rVert_{L^2(\Omega)}\leq 2\Phi_k(\mathbf{u})
	\end{equation}
	and thus  it will be enough to show that $\lvert\xi\rvert^k \hat{v}\in L^2(\R^n)$ and
	\begin{equation}\label{Aim2}
		\left\lVert\lvert\xi\rvert^k\hat{v}(\xi)\right\rvert_{L^2(\R^n)}
		\leq C^{k+1}\Phi_k(\mathbf{u})
	\end{equation}
	holds, where $C>0$ is independent of $k$. We write
	\begin{equation*}
		\lvert\xi\rvert^k\hat{v}(\xi)
		=\sum_{j=0}^\infty\bigl(1+\theta(j,\xi)\bigr)^{-1}g_j(\xi)\lvert\xi\rvert^k
	\end{equation*}
	and conclude that
	\begin{equation*}
		\lvert\xi\rvert^{2k}\left\lvert\hat{v}(\xi)\right\rvert^2\leq 
		\left(\sum_{j=0}^\infty \lvert g_j(\xi)\rvert^2\Lambda_{k_j}^{2k}\right)
		\Theta(\xi)
	\end{equation*}
	where 
	\begin{equation*}
		\Theta(\xi)=\sum_{j=0}^\infty \Lambda_{k_j}^{2k}
		\bigl(1+\theta_j(j,\xi)\bigr)^{-2}.
	\end{equation*}
	Hence \eqref{Aim2} (and thus the Lemma) is proven by  \eqref{Aim1} and the estimate
	\begin{equation}\label{Aim3}
		\lVert\Theta(\xi)\rVert_{L^\infty(\R^n)}\leq C^{k+1}
	\end{equation}
	where $C$ is some constant  independent of $k$.
	In order to establish \eqref{Aim3} we set 
	\begin{equation*}
		\Psi_j(\xi)=\left(\frac{\lvert\xi\rvert}{\Lambda_{k_j}}\right)^{2k}
		\bigl(1+\theta(j,\xi)\bigr)^{-2}
	\end{equation*}
	If $e^{2H} \gamma \Lambda_{k_j}\leq \lvert\xi\rvert$, where $H$ is the constant from \eqref{mg2},
	then we have that
	\begin{equation*}
		\begin{split}
			\Psi_j(\xi)&\leq 
			\left(\frac{\lvert\xi\rvert}{\Lambda_{k_j}}\right)^{2k}e^{2\omega_{\bM}(\Lambda_{k_j})}\gamma^{k_j}
			\left(\frac{\lvert\xi\rvert}{\Lambda_{k_j}}\right)^{-2k_j}
			\\
			&\leq \gamma^{2k}\left(\frac{\lvert\xi\rvert}{\gamma\Lambda_{k_j}}\right)^{2(k-k_j)}
			e^{2\omega_\bM(\Lambda_{k_j})}\\
			&\leq (\gamma e^{2H})^{2k}\exp\left(2\omega_{\bM}(\Lambda_{k_j})-4Hk_j\right)\\
			&\leq (\gamma e^{2H})^{2k} \exp\left(2Hk_j-4Hk_j\right)\\
			&\leq (\gamma e^{2H})^{2k}e^{-2H k_j}
		\end{split}
	\end{equation*}
	by Lemma \ref{FunctionLemma}.
	This gives 
	\begin{equation*}
		\sum_{e^2 \gamma\Lambda_{k_j}\leq \lvert\xi\rvert}
		\Psi(\xi)\leq \left(\gamma e^{2H}\right)^{2k}
		\sum_{j=0}^\infty e^{-2Hj}\leq C_1 \left(\gamma e^{2H}\right)^{2k}.
	\end{equation*}

	On the other hand, if $e^{2H}\gamma \Lambda_{k_j}\geq \lvert\xi\rvert$ 
	then we  estimate 
	\begin{equation*}
		\Psi_j(\xi)\leq \left(\frac{\lvert\xi\rvert}{\Lambda_{k_j}}\right)^{2k}.
	\end{equation*}
	If we set $j_0=\min\{j\in\Z_+\mid \gamma e^{2H}\Lambda_{k_j}\geq \lvert\xi\rvert\}$ 
	for a fixed $\xi$ then we have that
	\begin{equation*}
		\begin{split}
			\Psi(\xi)&\leq \left(\frac{\lvert\xi\rvert}{ \Lambda_{k_j}}\right)^{2k}
			\leq \left(\frac{\lvert\xi\rvert}{\Lambda_{k_{j_0}}}\right)^{2k}
			\left(\frac{\Lambda_{k_{j_0}}}{\Lambda_{k_j}}\right)^{2k}\\
			&\leq \left(\gamma e^{2H}\right)^{2k}\sigma^{j_0-j+1}
		\end{split}
	\end{equation*}
	and conclude that
	\begin{equation*}
		\sum_{\lvert\xi\rvert\leq e^{2H}\gamma\Lambda_{k_j}}\Psi_j(\xi)
		\leq \left(\gamma e^{2H}\right)^{2k}\sigma \sum_{j=0}^\infty \sigma^{-j}
		\leq C_2\left(\gamma e^{2H}\right)^{2k}
	\end{equation*}
\end{proof}
If $P$ is a differential operator with smooth coefficients then we set
\begin{equation*}
	\mathcal{P}_0(U)=\{u\in L^2(U)\mid Pu=0\}
\end{equation*}
for an open set $U$. Clearly $\mathcal{P}_0 (U)$ is a closed subspace of $L^2(U)$.
\begin{Prop}\label{HomogProp}
	Let $\Omega\subseteq\R^n$ be open, $\bM$ be an admissible weight sequence. 
	Furthermore assume that $P$ is a linear
	differential operator with $\Rou{\bM}(\Omega)$-coefficients.
	
	If $P$ is $\{\bM\}$-hypoelliptic at some point $x_0\in\Omega$ then there is a neighborhood $U_0$ of $x_0$ such that   
	for all open sets  $V\Subset U\subseteq U_0$ there are constants $C,h>0$
	such that for all $k\in\Z_+$ and all $u\in L^2(U)$ with $Pu=0$
	we have
	\begin{equation}\label{HomogeneousEstimate}
		\lVert u\rVert_{H^k(V)}\leq C h^k M_k\lVert u\rVert_{L^2(U)}.
	\end{equation}
\end{Prop}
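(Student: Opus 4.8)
The plan is to derive \eqref{HomogeneousEstimate} directly from Proposition \ref{FourierTransform}, applied to the space of solutions of $Pu=0$ itself; note that \eqref{HomogeneousEstimate} is precisely the content of \eqref{UltraCondition} in the degenerate case $Pu\equiv0$, so none of the interpolation machinery (Lemmas \ref{InterpolationLemma1}--\ref{InterpolationLemma2}) is needed here. Let $U_0$ be a neighborhood of $x_0$ on which $P$ is $\{\bM\}$-hypoelliptic and fix open sets $V\Subset U\subseteq U_0$. First I would take $E=\mathcal{P}_0(U)$, equipped with the $L^2(U)$-norm: it is a closed subspace of $L^2(U)$, hence a Banach space continuously injected in $L^2(U)$, and since $Pu\equiv0\in\Rou{\bM}(U)$ for every $u\in E$, $\{\bM\}$-hypoellipticity of $P$ in $U_0\supseteq U$ yields $u\in\Rou{\bM}(U)$ for all $u\in E$. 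Thus the hypotheses of Proposition \ref{FourierTransform} are satisfied, with the ambient $L^2$-space taken to be $L^2(U)$ and the ``$U_0$'' of that proposition taken to be $U$ itself.

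Next I would pick an intermediate open set $U'$ with $V\Subset U'\Subset U$, fix an Ehrenpreis-H\"ormander cut-off sequence $(\chi_k)_k$ supported in $U'$ and centered in $V$, and apply Proposition \ref{FourierTransform} to obtain constants $C,\gamma>0$ --- independent of $k\in\N$ and of $u\in E$ --- with $\bigl\lVert|\xi|^k\widehat{\chi_ku}\bigr\rVert_{L^2(\R^n)}\leq C\gamma^kM_k\lVert u\rVert_{L^2(U)}$. Since $\chi_j\equiv1$ on $V$, for any multi-index $\alpha$ with $|\alpha|=j\leq k$ one has $D^\alpha u=D^\alpha(\chi_ju)$ on $V$, so Plancherel together with $|\xi^\alpha|\leq|\xi|^{|\alpha|}$ gives $\lVert D^\alpha u\rVert_{L^2(V)}\leq C\gamma^jM_j\lVert u\rVert_{L^2(U)}\leq C\gamma^jM_k\lVert u\rVert_{L^2(U)}$, where I use that $(M_k)_k$ is non-decreasing (a consequence of \eqref{LogConvexity} and $M_1\geq M_0=1$). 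Summing $\lVert u\rVert_{H^k(V)}=\sum_{|\alpha|\leq k}\lVert D^\alpha u\rVert_{L^2(V)}$ and absorbing the resulting factor (the number of multi-indices of length $j$ in $n$ variables grows only geometrically in $j$) into a constant $h^k$, one arrives at $\lVert u\rVert_{H^k(V)}\leq Ch^kM_k\lVert u\rVert_{L^2(U)}$, as desired.

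I expect the only genuine point of care to be the legitimacy of invoking Proposition \ref{FourierTransform} for $E=\mathcal{P}_0(U)$: that proposition is stated for a Banach space injected in $L^2(\Omega)$ with $\Omega$ a neighborhood of $x_0$, while here the natural ambient space is $L^2(U)$ and $U$ need not contain $x_0$. However, the point $x_0$ is never used in the proof of Proposition \ref{FourierTransform} --- that proof relies only on the closed graph theorem applied to the restriction map $T_U\colon E\to\CB^{\{\bM\}}(U)$ and on \eqref{AnalyticInclusion2} --- so the statement applies verbatim with $L^2(\Omega)$ replaced by $L^2(U)$ and ``$U_0$'' replaced by $U$. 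Everything after that is the routine conversion of a weighted $L^2$-bound on $\widehat{\chi_ku}$ into an $H^k(V)$-bound on $u$.
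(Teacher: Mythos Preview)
Your argument is correct. Both your route and the paper's hinge on the same core idea: apply the closed graph theorem to the restriction map from the Banach space $\mathcal{P}_0(U)\subseteq L^2(U)$ into an ultradifferentiable function space, then convert the resulting uniform ultradifferentiable bound into an $H^k(V)$-estimate. The difference is purely in packaging. You reuse Proposition~\ref{FourierTransform} (which already contains the closed graph step) and pass through Plancherel to reach $\lVert D^\alpha u\rVert_{L^2(V)}\leq C\gamma^{|\alpha|}M_{|\alpha|}\lVert u\rVert_{L^2(U)}$; the paper instead repeats the closed graph argument directly for $T_V\colon\mathcal{P}_0(U)\to\CB^{\{\bM\}}(V)$, introduces auxiliary $L^2$-based spaces $H_{\bM,h}(V)$ and $H^{\{\bM\}}(V)=\mathrm{ind}_h\,H_{\bM,h}(V)$, and shows the continuous embedding $\CB^{\{\bM\}}(V)\hookrightarrow H^{\{\bM\}}(V)$ by the same summation estimate you perform at the end. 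Your version is a little more economical since it avoids redoing the closed graph step and the auxiliary spaces; the paper's version has the minor advantage of not needing to justify applying Proposition~\ref{FourierTransform} with $\Omega$ replaced by $U$ (a point you handle correctly).
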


\begin{proof}
	If $V\Subset U$ are open sets and $h>0$ then we set
	\begin{equation*}
		\lVert u\rVert^\bullet_{V,\bM,h}=\sup_{k\in\N_0}\frac{\lVert u\rVert_{H^k(V)}}{h^kM_k}
	\end{equation*}
	for $u\in\E(U)$.
	We define
	\begin{equation*}
		H_{\bM,h}(V)=\{u\in \E(U)\mid \lVert u\rVert_{V,\bM,h}<\infty\}
	\end{equation*}
	and introduce
	\begin{equation*}
		H^{\{\bM\}}(V)=\mbox{ind}_{h>0}H_{\bM,h}(V).%\\
	\end{equation*}
	We note that if $u\in \CB_{\bM,h}(V)$ then 
	\begin{equation*}
		\begin{split}
			\lVert u\rVert_{H^k(V)}^2&=\sum_{\alp\leq k} \lVert D^\alpha u\rVert_{L^2(V)}^2\\
			&\leq C^2\sum_{\alp\leq k} h^{2\alp}M^2_{\alp}\\
			&\leq C^2 M^2_k\sum_{j=0}^k h^{2k}\sum_{\alp=j}1\\
			&\leq C^2 M^2_k\sum_{j=0}^k \binom{n+j-1}{j}h^{2j}\\
			&\leq C^2 M^2_k\sum_{j=0}^k \binom{k}{j}h^{2j} \frac{(n+j-1)!(k-j)!}{(n-1)!k!}\\
			&\leq C^2 M^2_k\binom{n+k-1}{k}\left(1+h^2\right)^k\\
			&\leq C^2 M^2_k 2^{n-1}2^k\left(1+h^2\right)^k.
		\end{split}
	\end{equation*}
	Hence there is a constant $A$ such that
	\begin{equation*}
		\lVert u\rVert_{V,\bM,h_1}^\bullet\leq A\lVert u\rVert_{V,\bM,h}
	\end{equation*}
	where $h_1=2(1+h^2)$.
	Thus we obtain that there is a continuous embedding
	\begin{align*}
		\iota_V:\;\CB_{\bM,h}(V)&\longrightarrow H_{\bM,h_1}(V)\\
		\intertext{and consequently}
		\CB^{\{\bM\}}(V)&\longrightarrow H^{\{\bM\}}(V).
	\end{align*}
	
	Since $P$ is $\{\bM\}$-hypoelliptic at $x_0$, we know that there is a neighborhood
	$U_0\subseteq\Omega$ of $x_0$ such that  
	$u\vert_V\in \CB^{\{\bM\}}(V)$ for all $u\in \mathcal{P}_0(U)$ and all pairs $V\Subset U\subseteq U_0$.
	Then similarly to the proof of Proposition \ref{FourierTransform} we observe that
	the graph of the restriction map 
	\begin{equation*}
		T_V:\;\mathcal{P}_0(U)\longrightarrow \CB^{\{\bM\}}(V)
	\end{equation*}
	is closed. Hence the Closed Graph Theorem of De Wilde implies that $T_V$ is continuous
	and therefore the map
	\begin{equation*}
		S_V=\iota_V\circ T_V:\, \mathcal{P}_0(U)\longrightarrow H^{\{\bM\}}(V)
	\end{equation*}
	is continuous. Again as before in the proof of Proposition \ref{FourierTransform}
	we thus can conclude that for all $V\Subset U$ there exists
	$h>0$ such that the map $u\mapsto u\vert_V$ is continuous from $\mathcal{P}_0(U)$ to $H^{\{\bM\}}(V)$
	which is equivalent to the existence of some constant $C>0$ such that
	\begin{equation*}
		\lVert u\rVert^\bullet_{V,\bM,h}\leq C\lVert u\rVert_{L^2(U)}\qquad 
		\fa u\in\mathcal{P}_0(U).
	\end{equation*}
\end{proof}

\begin{proof}[Proof of Theorem \ref{MainThm}]
	If we assume that $u\in\Dli(U)$, $U$ being an open subset of $U_0$, is such that 
	$Pu\in\Rou{\bM}(U)$ then (1) implies that $u\vert_V\in \E(V)$ 
	for any $V\Subset U$. In particular, $u\in H^k(V)$ for all $k\in\N_0$.
	We observe also that we have for a weight sequence $\bM$ that
	\begin{equation*}
		\begin{split}
			\trb Pu\trb_{U,\bM,k}&=\sum_{\alp\leq k} \Lambda_k^{k-\alp}
			\lVert D^\alpha (Pu)\rVert_{L^2(U)}\\
			&\leq C_0 \sum_{\alp\leq k} \Lambda_k^{k-\alp}h_0^{\alp} M_\alp\\
			&\leq C_0 \sum_{\alp\leq k}\Lambda^{k-\alp}
			\Lambda_k^{\alp}h_0^k\\
			&\leq C_0 M_k\sum_{\alp\leq k} h_0^{\alp}\\
			&\leq C_1 h_1^k M_k
		\end{split}
	\end{equation*}
	for some constants $C_1,h_1>0$ independent of $k$. Thence \eqref{UltraCondition} implies that 
	\begin{equation*}
		\begin{split}
			\lVert u\rVert_{H^k(V)}
			&\leq CL^k\left[\trb Pu\trb_{U,\bM,k}+M_k\lVert u\rVert_{L^2(U)}\right]\\
			&\leq CL^k\left[C_1h_1^k M_k+C^\prime M_k\right]\\
			&\leq C_2h_2^k M_k
		\end{split}
	\end{equation*}
	for some $C_2,h_2>0$.
	Since $\lVert D^\alpha u\rVert_{L^2(V)}\leq\lVert u\rVert_{H^{\alp}(V)}$ 
	it follows that $u$ is ultradifferentiable of class $\{\bM\}$ in $V$. 
	Since $V\Subset U$ is arbitrary we have actually that $u\in\Rou{\bM}(V)$.
	
	On the other hand assume now that $P$ is a differential operator which is 
	$\{\bM\}$-hypoelliptic in $\Omega$ and satisfies \eqref{RightInverse}, i.e.\ there is 
	a continuous map $R:L^2(\Omega)\rightarrow L^2(\Omega)$ such that $PR=\mathrm{Id}$.

	If 
	\begin{equation*}
		G_{\{\bM\}}=\{u\in L^2(\R^n)\mid e^{\omega_\bM(\lvert\xi\rvert)}\hat{u}\in L^2(\R^n)\}
	\end{equation*} 
	is the space from Lemma \ref{InterpolationLemma1} 
	then we set $\tilde{G}_{\{\bM\}}=\{f\vert_\Omega\mid f\in G_{\{\bM\}}\}$.
	The map 
	\begin{equation*}
		G_{\{\bM\}}\ni f\longmapsto R(f\vert_\Omega) \in L^2(\Omega)
	\end{equation*}
	is injective and 
	\begin{equation*}
		E=\{R(f)\mid f\in \tilde{G}_{\{\bM\}}\}
	\end{equation*} 
	is a Banach space with the norm inherited from $G_{\{\bM\}}$.
	It is clear that the injection $E\rightarrow L^2(\Omega)$ is continuous and
	since $P$ is $\{\bM\}$-hypoelliptic in $\Omega$ we can infer the following fact:
	For all $u\in E$ we have that $Pu\in\tilde{G}$ is  of class $\{\bM\}$
	in $\Omega$, but this implies $u$ is of class $\{\bM\}$ in $\Omega$.
	Hence we can use Lemma \ref{InterpolationLemma2}. 
	
	Let $C_0$ be the constant appearing in Lemma \ref{InterpolationLemma2}
	and $V\Subset U\subseteq \Omega$ be arbitrary open sets.
	We choose an Ehrenpreis-H\"ormander cut-off sequence $\chi_j\in\mathcal{D}(U)$ centered in $V$.
	Let $u\in \Dli(U)$ be such that $Pu\in H^k(U)$. We set $f=\chi_k Pu$. 
	By Lemma \ref{ComparisonLemma} we have that
	\begin{equation}\label{MainEstimate1}
		\trb f \trb_{\R^n,\bM,k}\leq \gamma^k\trb Pu \trb_{U,\bM,k}
	\end{equation}
	for some constant $\gamma$ independent of $k$.
	According to Lemma \ref{InterpolationLemma1} we can write 
	\begin{equation*}
		f=\sum_{j=0}^\infty f_j
	\end{equation*}
	with $f_j\in G_{\{\bM\}}$ and 
	\begin{equation*}
		\sum_{j=0}^{\infty}\Lambda_{k_j}^{2k}\left(\lVert f_j\rVert^2_{L^2(\R^n)}
		+e^{-2\omega_{\bM}(\Lambda_{k_j})}\lVert f_j\rVert^2_{G}\right)
		\leq 2\gamma_1^k\bigl(\trb f\trb_{\R^n,\bM,k}\bigr)^2.
	\end{equation*}
	Setting $v_j=R(f_j\vert_\Omega)\in E$ we obtain that
	\begin{equation*}
		\sum_{j=0}^{\infty}\Lambda_{k_j}^{2k}\left(\lVert v_j\rVert^2_{L^2(\Omega)}
		+e^{-2\omega_{\bM}(\Lambda_{k_j})}\lVert v_j\rVert^2_{B}\right)
		\leq 2(1+\lVert R\rVert^2)\gamma_1^k\bigl(\trb{f}\trb_{\R^n,\bM,k}\bigr)^2
	\end{equation*}
	where $\lVert R\rVert$ is the norm of $R$ in $\mathcal{L}(L^2(\Omega))$.
	The series $v=\sum_{j=0}^{\infty}v_j$ converges in $L^2(\Omega)$ by Lemma 
	\ref{InterpolationLemma2} and furthermore we have that $v\vert_V\in H^k(V)$ and
	\begin{equation}\label{MainEstimate2}
		\lVert v\vert_V\rVert_{H^k(V)}\leq C_1^{k+1}\trb f\trb_{\R^n,\bM,k}
	\end{equation}
	for some constant $C_1$ independent of $k$.
	
	We obtain actually that
	\begin{equation*}
		P\left((u-v)\vert_V\right)=0.
	\end{equation*}
	Since $P$ is $\{\bM\}$-hypoelliptic in $U_0$
	it follows that  $u-v\in\Rou{\bM}(V)$. 
	Applying Proposition \ref{HomogProp}  we infer that for each open set $W\Subset V$
	there is a constant $C_2>0$ independent of $u$ (and $v$) such that
	\begin{equation}\label{MainEstimate3}
		\fa k\in\Z_+:\quad \lVert{(u-v)\vert_W}\rVert_{H^k(W)}\leq C_2^{k+1}M_k
		\lVert(u-v)\vert_V\rVert_{L^2(V)}.
	\end{equation}
	We note also that $v=R(f\vert_\Omega)$ and therefore
	\begin{equation}\label{AuxEstimate1}
		\lVert v\rVert_{L^2(\Omega)}\leq \lVert R\rVert
		\lVert f\rVert_{L^2(\R^n)}\leq \lVert R\rVert \lVert Pu\rVert_{L^2(U)}.
	\end{equation}
	Furthermore it is easy to see that
	\begin{equation}\label{AuxEstimate2}
		M_k\lVert Pu\rVert_{L^2(U)}\leq \trb Pu\trb_{U,\bM,k}.
	\end{equation}
	We are now able to finish the proof:
	\begin{equation*}
		\begin{split}
			\lVert u\rVert_{H^k(W)}&\leq \lVert v\rVert_{H^k(W)}+\lVert u-v\rVert_{H^k(W)}\\
			&\leq C_1^{k+1}\trb f\trb_{\R^n,\bM,k}+C_2^{k+1}M_k\lVert u-v\rVert_{L^2(V)}\\
			&\leq (C_1\gamma)^{k+1}\trb Pu\trb_{U,\bM,k}+C_2^{k+1}M_k
			\left[\lVert u\rVert_{L^2(V)}+\lVert v\rVert_{L^2(V)}\right]
		\end{split}
	\end{equation*}
	by \eqref{MainEstimate1}, \eqref{MainEstimate2} and \eqref{MainEstimate3}.
	Applying \eqref{AuxEstimate1} and \eqref{AuxEstimate2} we see that
	\begin{equation*}
		M_k\lVert v\rVert_{L^2(V)}\leq \lVert R\rVert\,\trb Pu\trb_{U,\bM,k}.
	\end{equation*}
	Hence we have shown that there are constants $C,h>0$ such that for all
	$k\in\Z_+$ and every $u\in\Dli(U)$ with $Pu\in H^k(U)$ the following estimate holds:
	\begin{equation*}
		\lVert u\rVert_{H^k(V)}\leq C h^k\left[\trb Pu \trb_{U,\bM,k}+ M_k\lVert u\rVert_{L^2(U)}\right].
	\end{equation*}
\end{proof}

\section{The hyperfunction case - Final remarks}

Denote by $\mathrm{B}$ the sheaf of (germs of) hyperfunctions in $\R^n$. 
We strength the definition of hypoellipticity in the following way: 

\begin{Def}
	Let $U\subseteq\R^n$ be an open set and $\bM$ be a weight sequence.
	If $P=P(x,D)$ is a linear differential operator with real-analytic coefficients then
	$P$ is $\{\bM\}$-hypoelliptic in $U$ in the hyperfunction sense  if given $u\in\mathrm{B}(U)$ the following holds:
	if $V\subseteq U$ is open and if $Pu\vert_V\in\Rou{\bM}(V)$ then $u\vert_V\in\Rou{\bM}(V)$.
	\end{Def}
	
	Furthermore $P$ is $\{\bM\}$-hypoelliptic at $x_0\in U$ in the hyperfunction sense if there is a neighborhood $U_0$ of $x_0$ such that $P$ is $\{\bM\}$-hypoelliptic in $U_0$ in the hyperfuntion sense.
\vsp

A close look at the proof of Theorem 1 leads to the following result:
\begin{Thm}\label{MainThm2}
	Let $\bM$ be an admissible weight sequence and
	$P$ be a differential operator with 
	real analytic coefficients 
	in $\Omega$ satisfying \eqref{RightInverse}.
	
	Then $P$ is $\{\bM\}$-hypoelliptic 
	at  a point $x_0\in \Omega$ in the hyperfunction sense if and only if there is a neighborhood $U_0\subseteq\Omega$
	of $x_0$ such that
	for all open sets $V\Subset U\subseteq U_0$ there are constants $C,L>0$ such that
	for all $\mathrm{B}(U)$ and all $k\in\Z_+$ we have 
	\begin{gather}
		\label{SmoothCondition} Pu\in H^k(U)\Longrightarrow u\in H^k(V),\\
		\lVert u\rVert_{H^k(V)}\leq CL^k \left(\trb Pu\trb_{U,\bM,k}
		+M_k\lVert u\rVert_{L^2(U)}\right).
		\label{UltraCondition}
	\end{gather}
\end{Thm}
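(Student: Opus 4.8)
The strategy is to rerun the proof of Theorem \ref{MainThm} essentially verbatim, keeping track of the only two features in which the present situation differs: the coefficients of $P$ are now only assumed real analytic, and the solution $u$ is a priori a hyperfunction rather than a distribution. The first observation is that, since an admissible weight sequence satisfies \eqref{AnalyticInclusion}, every real analytic function is of class $\{\bM\}$; hence $P$ has $\Rou{\bM}$-coefficients, a differential operator with real analytic coefficients acts on $\mathrm B$ (so every expression in the statement is meaningful), and all the auxiliary results of Section 3 that involve the coefficients of $P$ remain available. Moreover, a differential operator which is $\{\bM\}$-hypoelliptic in the hyperfunction sense is in particular $\{\bM\}$-hypoelliptic in the distributional sense; consequently Lemmas \ref{InterpolationLemma1} and \ref{InterpolationLemma2}, Propositions \ref{FourierTransform} and \ref{HomogProp}, the Ehrenpreis--H\"ormander cut-offs, the spaces $G_{\{\bM\}}$, $\CB^{\{\bM\}}$, $H^{\{\bM\}}$ and the Banach space $E=\{R(f)\mid f\in \tilde G_{\{\bM\}}\}$ are all at our disposal exactly as in Section 3, and Theorem \ref{MainThm} itself applies to $P$.

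For the ``if'' direction I would take $u\in\mathrm B(U)$ with $Pu\vert_V\in\Rou{\bM}(V)$ for some open $V\subseteq U\subseteq U_0$, choose nested open sets $W''\Subset W'\Subset W\Subset V$, and restrict: then $Pu\vert_{W'}\in H^k(W')$ for every $k$, so \eqref{SmoothCondition} forces $u\vert_{W''}\in\bigcap_k H^k(W'')$, i.e.\ $u$ is smooth there, and since $W''$ is arbitrary, $u\vert_V\in\E(V)$. From this point the first half of the proof of Theorem \ref{MainThm} carries over word for word: \eqref{UltraCondition} combined with the estimate $\trb Pu\trb_{W',\bM,k}\le C_1 h_1^k M_k$ (valid because $Pu$ is of class $\{\bM\}$) gives $\|u\|_{H^k(W'')}\le C_2 h_2^k M_k$, whence $u\vert_V\in\Rou{\bM}(V)$.

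For the ``only if'' direction, assuming $P$ is $\{\bM\}$-hypoelliptic at $x_0$ in the hyperfunction sense, I would repeat the argument of the proof of Theorem \ref{MainThm}: given $u\in\mathrm B(U)$ with $Pu\in H^k(U)$, set $f=\chi_k Pu\in H^k(\R^n)$, decompose $f=\sum_j f_j$ via Lemma \ref{InterpolationLemma1}, put $v_j=R(f_j\vert_\Omega)\in E$, $v=\sum_j v_j\in L^2(\Omega)$, and obtain $v\vert_V\in H^k(V)$ together with the bound \eqref{MainEstimate2}, exactly as before; one then checks as in Section 3 that $P\bigl((u-v)\vert_V\bigr)=0$. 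This is the single step where the present hypothesis is genuinely used: $u-v$ is only a \emph{hyperfunction} on $V$, so it is the $\{\bM\}$-hypoellipticity \emph{in the hyperfunction sense} that yields $u-v\in\Rou{\bM}(V)$. In particular $u-v$ becomes a smooth function on $V$, which gives \eqref{SmoothCondition}; and, inserting $W\Subset V''\Subset V$, the homogeneous solution $u-v$ lies in $L^2(V'')$ with $P(u-v)=0$, so Proposition \ref{HomogProp} applies to the pair $W\Subset V''$ and yields $\|u-v\|_{H^k(W)}\le C_2^{k+1}M_k\|u-v\|_{L^2(V'')}$. Combining this with the bound for $\|v\|_{H^k(V)}$ and with \eqref{MainEstimate1} and \eqref{AuxEstimate1}--\eqref{AuxEstimate2} precisely as in the proof of Theorem \ref{MainThm} produces \eqref{UltraCondition}.

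The substantive content of the proof — and the reason the statement follows from ``a close look at the proof of Theorem \ref{MainThm}'' — is to verify that nowhere else in that argument is anything used beyond (a) distributional $\{\bM\}$-hypoellipticity and (b) $L^2$/Sobolev data: all the functional-analytic machinery (the closed graph theorem applied to restriction maps into $\CB^{\{\bM\}}$ and $H^{\{\bM\}}$, the Hilbert space $G_{\{\bM\}}$, the two interpolation lemmas) is insensitive to whether the ambient solution is a distribution or a hyperfunction, because the only object that is a priori a hyperfunction, namely $u-v$, is disposed of the instant one knows $P(u-v)=0$. The main point requiring care is therefore not conceptual but organizational: the bookkeeping with nested relatively compact open sets, needed both to deduce \eqref{SmoothCondition} for hyperfunction $u$ from the local decomposition $u=v+(u-v)$ and to apply Proposition \ref{HomogProp}, which is stated for $L^2$ solutions, only after $u-v$ has been promoted to a genuine function.
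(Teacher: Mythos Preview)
Your proposal is correct and follows exactly the approach the paper intends: the paper offers no separate proof beyond the remark that ``a close look at the proof of Theorem 1'' yields the result, and you have carried out precisely that close look, correctly isolating the single step (the passage from $P((u-v)\vert_V)=0$ to $u-v\in\Rou{\bM}(V)$) where the hyperfunction hypothesis is genuinely invoked, and noting that Proposition \ref{HomogProp} then applies because $u-v$ has been upgraded to a function. Your observation that hyperfunction $\{\bM\}$-hypoellipticity implies the distributional version (so all of Section 3 remains available) and your care with the nested open sets are exactly the organizational points needed.
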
 

An operator $P=P(x,D)$  in an open set $W\subseteq\R^n$ will be said to belong to the H\"ormander class $\mathfrak{H}(W)$ if it can be written in the form
$$             P = \sum_{j=1}^\nu X_j^2 , $$
where each $X_j$ is a real-valued, real analytic vector field defined in $W$ and the following condition is satisfied:
the Lie algebra spanned by $X_1,\ldots,X_\nu$ has rank equal to $n$ at any point of $W$. 

It can be proved that every point in $W$ has an open neighborhood $\Omega$ such that if $P\in\mathfrak{H}(W)$
then  $P$  (and also its transpose!) satisfy \eqref{RightInverse}. 
In particular, Theorem \ref{MainThm2} applies to $P\in\mathfrak{H}(W)$ restricted to $\Omega$. 
It is also well known  (cf.~\cite{DerridjZuily}) that given
$P\in \mathfrak{H}(W)$ and $x_0\in W$ then there is $s_0\geq 1$ so that $P$ is $\left\{\mathbf{G}^{s_0}\right\}$-hypoelliptic at $x_0$. 
According to Cordaro-Hanges \cite{CordaroHanges}
it follows that  $P$ is $\left\{\mathbf{G}^{s_0}\right\}$-hypoelliptic at $x_0$ in the hyperfunction sense.  By an elementary extension of Corollary \ref{MetivierCor1} to this hyperfunction set up 
it follows that $P$ is then $\{\bM\}$-hypoelliptic at $x_0$ in the hyperfunction sense  for every weight sequence $\mathbf{G}^{s_0}\preceq \bM$.

\section*{Acknowledgments}
\n The first author was partially supported by Conselho Nacional de Desenvolvimento Cient\'ifico e Tecnol\'ogico-CNPq, Grant 303945/2018-4 and S\~ao Paulo Research Foundation (FAPESP), Grant 2018/14316-3. The second author supported by the Austrian Science Fund (FWF) Grant J4439 in order to develop a pos doctoral program at the University of São Paulo, São Paulo, Brazil, under the supervision of the first author.

\bibliographystyle{abbrv}
%\bibliography{Metivier}

\vskip 0.3in

\begin{minipage}[b]{8 cm}
	{\bf Paulo D. Cordaro} 
	\\ 
	University of S\~ao Paulo\\ 
	S\~ao Paulo, Brazil\\
	E-mail: {\it cordaro@ime.usp.br}
\end{minipage}
\hfill
\begin{minipage}[b]{8 cm}
	{\bf Stefan Fürdös} ({\it \small Corresponding author})\\
	University of S\~ao Paulo  \\
	S\~ao Paulo, Brazil\\
	E-mail: {\it stefan.fuerdoes@univie.ac.at}
\end{minipage}

\end{document}